\documentclass[12pt, reqno,oneside]{amsart}
\usepackage{blindtext}
\usepackage[a4paper, total={6in, 9in}]{geometry}
\usepackage{hyperref}
\usepackage{amsmath}
\usepackage{mathrsfs}
\usepackage{graphicx} 
\usepackage{amsfonts}
\usepackage{amsthm}
\usepackage{graphicx} 
\textheight 20.5truecm \textwidth 14.5truecm
\setlength{\oddsidemargin}{0.35in}\setlength{\evensidemargin}{.34in}
\setlength{\topmargin}{.3cm}
\AtEndDocument{%
	\par
	\medskip
	
\textsc{Department Of Mathematics, Middle East Technical University, 06800 Ankara, Turkey}

\textit{ E-mail address}: \texttt{akyart@metu.edu.tr}
}
\usepackage{thmtools}

\newcommand{\R}{\mathbb{R}}
\newcommand{\C}{\mathbb{C}}
\newcommand{\WRDX}{W_d^r(X_\C)}
\newcommand{\p}{{\mathbb{P}^1}}
\theoremstyle{definition}
\newtheorem{definition}{Definition}
\newtheorem{remark}{Remark}
\newtheorem{example}{Example}

\newcommand{\teotext}{		Let $X$ be a real trigonal curve of genus $g$ and Maroni invariant $m$. If $d$ and $r$ are positive integers such that  $d<g$ and $g-d+r-1=m\leq d-2r-1$, then $$\displaystyle s_{n(X)}\bigl(d-3r\bigr)\leq n(W_d^r)\leq s_{n(X)} \bigl(d-3r\bigr)+\binom{n(X)-1}{2d-g-3r+1},$$
	where $s_n(l)=\displaystyle\sum_{k=0}^{\lfloor l/2\rfloor}\binom{n}{l-2k}$ for any positive integers $n$ and $l$. }
\newtheorem*{T1}{Theorem~\ref{teoremim}}

\newcommand{\thmtext}{	In addition to the conditions of Theorem \ref{teoremim} if we assume $2g-3d+3r=0$, then we have 
	$$n(W_{d}^r)=s_{n(X)}\left(d-3r\right)+\binom{n(X)-1}{2d-g-3r+1}.$$
}
\newtheorem*{T2}{Theorem~\ref{teoremim2}}

\theoremstyle{plain}
\newtheorem{prop}{Proposition}
\newtheorem{teorem}{Theorem}
\newtheorem{lemma}{Lemma}
\newtheorem{corollary}{Corollary}

\begin{document}
	\author{Turgay Akyar}
\title{Special Divisors on Real Trigonal Curves}
\date{}
\maketitle

\begin{abstract}
	In this paper we examine the topology of Brill-Noether varieties associated to real trigonal curves. More precisely, we aim to count the connected components of the real locus of the varieties parametrizing linear systems of degree $d$ and dimension at least $r$. We do this count when the relations $m=g-d+r-1\leq d-2r-1$ are satisfied, where $m$ is the Maroni invariant and $g$ is the genus of the curve.
\end{abstract}
\let\thefootnote\relax\footnotetext{Key words: Real Brill-Noether theory, trigonal curves.}
\let\thefootnote\relax\footnotetext{AMS Subject Classification: 14H51, 14P25.}
\section{Introduction}
Let $X$ be a smooth projective geometrically irreducible curve defined over $\R$. By $X_{\C}$ we denote the complexification of $X$ obtained by base change from $\R$ to $\C$. We want to understand under which conditions real complete linear systems on $X$ exist, or equivalently the real locus of the variety $$W_d^r(X_{\C})=\bigl\{|D|: h^0(X,\mathcal{O}_X(D))\geq r+1,\text{deg}(D)=d\bigr\}\subset Pic^d(X)$$ is non-empty. Indeed, not only the existence of its real points but also topological properties of this real locus have yet to be investigated. One can find \textit{real Brill-Noether} results in \cite{Chaudhary} concerning the existence of general real curves having minimal degree maps to $\mathbb{P}^1_{\R}$. Instead of these maps, in \cite{Huisman} base point-free pencils of prescribed topological degrees are considered.


We will follow a direct approach to the problem.
Let $n(Y)$ stand for the number of connected components of the real locus $Y(\R)$ for any variety $Y$ defined over $\mathbb{R}$. Then we know by Harnack's theorem for curves that $n(X)\leq g+1$. Analogously, a natural topological question for the real Brill-Noether theory is calculating $n(\WRDX)$. The answer for the case $r=0$ is already known for any curve \cite{Harris-Gross}.
 
 When $X$ is trigonal, $\WRDX$ contains at most two irreducible components. Furthermore, each component is a translate of some $W^0_{d'}(X_{\C})$ when we consider them inside the Jacobian $J(X)$ of the curve. This helps us to make use of the case $r=0$ in search of $n(\WRDX).$

 
 The canonical image $\phi_{K}(X)\subset \mathbb{P}^{g-1}$ of $X$, where $\phi_{K}$ is the morphism induced by the canonical bundle $K_X$ of $X$, lies on a Hirzebruch surface \cite[Section 1]{MarSch}  $$Y=\Sigma_{g-2-2m}\cong \mathbb{P}\bigr(\mathcal{O}_\p (g-2-m)\oplus\mathcal{O}_\p(m)\bigl)$$
 which is also a real rational scroll  in $\mathbb{P}^{g-1}$. This unique integer $m$ is called the Maroni invariant of trigonal curve $\pi:X\rightarrow \mathbb{P}^1_\mathbb{C}$ and it satisfies the inequalities $$\dfrac{g-4}{3}\leq m\leq \dfrac{g-2}{2}$$ such that 
  $$\pi_* K_X\cong\mathcal{O}_{\mathbb{P}^1}(g-2-m)\oplus \mathcal{O}_{\mathbb{P}^1}(m)\oplus \mathcal{O}_{\mathbb{P}^1}(-2).$$
 Using the descriptions of connected components of $W^0_{d}(X_{\C})$ and the cohomological calculations obtained by the above isomorphism, we obtain the following result.
\begin{T1}
	\teotext
\end{T1}

If $T$ stands for the unique pencil of degree $3$ on $X_{\C}$, then the number of base points of $K_X-(g-d+2r-1)T$ is $2g-3d+3r+1$. Putting an extra condition on this number, we obtain an exact count.

\begin{T2}
	\thmtext
\end{T2}


\section{Preliminaries}

In this section we recall basic definitions and facts about real linear systems, the varieties parametrizing them and trigonal curves.

\subsection*{Notations}
Throughout this paper  $X$ is a real, smooth, projective, geometrically irreducible curve of genus $g$ together with an  anti-holomorphic involution $\sigma$ induced by complex conjugation. We assume that $X$ has real points, i.e $X(\mathbb{R})\neq\emptyset$. $K_X$ is the canonical divisor class of $X$ or any effective divisor from this class if there is no confusion. If $Y$ is any variety defined over $\R$, then $n(Y)$ represents the number of connected components of $Y(\R)$.

\subsection{Real Linear Systems}

The action of $\sigma$ on $X_\C$ can be linearly extended to the divisor group Div$(X_\C)$. A divisor $D$ on $X_\mathbb{C}$ is called \textit{real} if it is invariant under the $\sigma$-action. Such a divisor will be also called a divisor on $X$ and they all together form the real divisor group $\text{Div}(X)$ of $X$.

If $f\in \C(X)^*$ and $z$ is any local coordinate, then we let $\sigma\bigl(f(z)\bigr)=\overline{f(\overline{z})}$ so that $\sigma$ acts on the principal divisor div$(f)$ by $\sigma\bigl(\text{div}(f)\bigr)=\text{div}\bigl(\sigma(f)\bigr)$. This allows us to extend the action to linear systems as well. Recall that a (complete) $g_d^r$ on $X_\mathbb{C}$ is a linear system $|D|$ such that $deg(D)=d$ and $r(D):=h^0(X_\mathbb{C},\mathcal{O}_{X_\mathbb{C}}(D))-1=r$ is its projective dimension. It is called $\sigma$\textit{-invariant}
if $\sigma(E)\sim E$ for every $E\in |D|$.

Given a degree $d$ real divisor $D$ such that $r(D)=r$, a \textit{real} $g_d^r$ is the set $\{E\in \text{Div}(X): E\sim D\ \text{ and } E \text{ is effective}\}$. Clearly a real $g_d^r$ defines a  $\sigma$-invariant $g_d^r$ on $X_\mathbb{C}$. The converse also holds. By the assumption $X(\mathbb{R})\neq\emptyset$, we know that any $\sigma$-invariant $g_d^r$ can be represented by a real divisor so it defines a unique real $g_d^r$ \cite{Harris-Gross}. Therefore we will not care about the difference and we will use the notion of real $g_d^r$. The canonical class $K_X$, which is also real, is the unique $g_{2g-2}^{g-1}$ on $X_\C$.

\begin{definition}[\cite{Huisnonspe}, Page 23]
	For a real divisor $\mathit{D}=\displaystyle\sum_{p\in X} n_p [p]$, we define $\delta(D)$ to be the number of connected components $C$ of $X(\mathbb{R})$ such that $deg_C(D):=\displaystyle\sum_{p\in C} n_p$ is odd.
\end{definition}

 In \cite[Proposition 2.1]{Huisnonspe} the author also shows that $\delta(K_X)=0$. Thanks to \cite[Lemma 4.1]{Harris-Gross} we know that for any $f\in \R (X)^*$ the principal divisor div$(f)$ has an even number of points on each connected component of $X(\R)$. This allows us to extend this definition further to real linear systems. Given a real $g_d^r$, any connected component contributing to $\delta(g_d^r)$ is called a \textit{pseudo-line} for this $g_d^r$.
 \begin{example}
 	Consider a plane curve $X\subset \mathbb{P}^2_{\R}$ of degree $d$. Traditionally, a connected component of $X(\R)$ which intersects with a line in an odd number of points is called a \textit{pseudo-line} of $X$. Note that any effective divisor from the linear system $|\mathcal{O}_X(1)|$ is cut out on $X$ by the lines in $\mathbb{P}^2$. According to the previous definition, a connected component is a pseudo-line of $X$ if and only if it is a pseudo-line for $|\mathcal{O}_X(1)|$.
 	
 \end{example}
\begin{lemma}\label{deltaineq}
$\delta(g_d^r)\leq \text{min}\{n(X),d\}\text{ and }\delta(g_d^r)\equiv d \text{ mod 2}.$

\end{lemma}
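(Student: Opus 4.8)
The plan is to reduce both assertions to a single observation: that the parity of $\deg_C(D)$ on each connected component $C$ of $X(\mathbb{R})$ depends only on the linear-equivalence class of the real divisor $D$, so that $\delta$ is a genuinely well-defined invariant of the real $g_d^r$. Indeed, if $D\sim D'$ as real divisors then $D-D'=\operatorname{div}(f)$ for some $f\in\mathbb{R}(X)^*$, and by the cited \cite[Lemma 4.1]{Harris-Gross} the divisor $\operatorname{div}(f)$ has an even number of points on each component $C$; hence $\deg_C(D)\equiv\deg_C(D')\pmod 2$ and the count of odd components is unchanged. This lets me compute $\delta(g_d^r)$ from whichever representative is most convenient, and by the very definition of a real $g_d^r$ I may always choose an effective one.

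For the congruence $\delta(g_d^r)\equiv d\pmod 2$, I would fix any representative $D=\sum_p n_p[p]$ and split the local degrees according to whether the points are real. Working modulo $2$, and using that each summand contributes $1$ precisely when $\deg_C(D)$ is odd,
$$\delta(g_d^r)=\#\{C:\deg_C(D)\text{ odd}\}\equiv\sum_{C}\deg_C(D)=\sum_{p\in X(\mathbb{R})}n_p\pmod 2.$$
Because $D$ is real, the non-real points occur in conjugate pairs $\{p,\sigma(p)\}$ with $n_p=n_{\sigma(p)}$, so their total contribution $\sum_{p\notin X(\mathbb{R})}n_p$ is even. Hence $\sum_{p\in X(\mathbb{R})}n_p\equiv\deg(D)=d\pmod 2$, which gives the congruence.

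For the inequality, $\delta(g_d^r)\leq n(X)$ is immediate because $\delta$ counts a subset of the connected components of $X(\mathbb{R})$. To obtain $\delta(g_d^r)\leq d$ I would pass to an effective representative $D$; then every $n_p\geq 0$, so each component $C$ counted by $\delta$ satisfies $\deg_C(D)\geq 1$, being a nonnegative odd integer. Summing over these components and then bounding by the full real degree,
$$\delta(g_d^r)\leq\sum_{C:\,\deg_C(D)\text{ odd}}\deg_C(D)\leq\sum_{p\in X(\mathbb{R})}n_p\leq\deg(D)=d.$$
Combining the two bounds yields $\delta(g_d^r)\leq\min\{n(X),d\}$.

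The only genuinely delicate point is the well-definedness step, which must be in place before $\delta(g_d^r)$ even makes sense and which legitimizes the switch to an effective representative used in the inequality; once that is secured, both claims follow from the parity bookkeeping above, with the conjugate-pair structure of real divisors supplying the evenness needed in each case.
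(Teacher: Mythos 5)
Your proof is correct and follows essentially the same route as the paper's: bound $\delta$ by counting components (using effectiveness for the bound by $d$), and get the parity from the evenness of $\deg_C(D)-P_C(D)$ on each component together with the conjugate-pair structure of non-real points. The well-definedness step you emphasize is also present in the paper, just placed before the lemma rather than inside its proof, via the same citation of \cite[Lemma 4.1]{Harris-Gross}.
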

\begin{proof}

	 Say $C_1,...,C_{n(X)}$ are the connected components of $X(\R)$ and take any $D\in g_d^r$. Let us write $P_{C_i}(D)=0$  if $\deg_{C_i}(D)$ is even and $P_{C_i}(D)=1$ otherwise. Then we have the relations \begin{align*}
	 	\delta(D)=\displaystyle \sum_{i=1}^{n(X)} P_{C_i}(D)\leq &\displaystyle \sum_{i=1}^{n(X)} 1=n(X),\\
	 \delta(D)=\displaystyle \sum_{i=1}^{n(X)} P_{C_i}(D)	\leq& \displaystyle \sum_{i=1}^{n(X)} deg_{C_i}(D) \leq\deg(D)=d.		
	 \end{align*}
		Note that $\deg_{C_i}(D)-P_{C_i}(D)$ is by definition an even number for each $i$.  As a non-real point appears with its conjugate in the support of the real divisor $D$, this yields
		\begin{align*}
			\delta(D)=\displaystyle \sum_{i=1}^{n(X)} P_{C_i}(D)&\equiv\displaystyle \sum_{i=1}^{n(X)} \deg_{C_i}(D) \hspace{0.4cm}(\textit{mod }2)\\
			&\equiv d\hspace{0.4cm} (\textit{mod }2).
		\end{align*}
\end{proof}

\subsection{Brill-Noether Theory}

Let $X^d$ be \textit{the d-fold product} of $X$ and $X_d:=X^d/S_d$ be \textit{the d-fold symmetric product }of $X$, where $S_d$ is the symmetric group of all permutations on $d$ symbols. $X_d$ is a smooth, projective variety of dimension $d$ \cite[Page 236]{principles} and its points correspond to effective divisors of degree $d$ on $X_{\mathbb{C}}$. Let $s:X^d\longrightarrow X_d$ be the quotient map and consider the following map 
$$
u:X_d\longrightarrow\text{Pic}^d(X)$$
$$D\longrightarrow |D|,$$
where $Pic^d(X)$ is the subscheme of $Pic(X)$ containing degree $d$ linear systems. 

Let $W^r_d:=\WRDX$ be the subvariety of $Pic^d(X)$ parametrizing linear systems on $X_\C$ of projective dimension at least $r.$ For every $D\in X_d$, $h^0(X_{\C},\mathcal{O}_{X_{\C}}(D))\geq1 $ by the effectiveness of $D$ so that $r(|D|)\geq 0$. Conversely, any linear system $|D|$ such that $r(|D|)=h^0(X_{\C},\mathcal{O}_{X_{\C}}(D))-1\geq 0$ must contain an effective divisor. Hence we may write \begin{equation}\label{imageu}
	u(X_d)=W_d^0(X_\C)
\end{equation}

Note that both maps $s$ and $u$ are defined over $\mathbb{R}.$ This information allows us to write the equality $\sigma(|D|)=|\sigma(D)|$. In this case the real points of $\WRDX$ correspond to the real linear systems of dimension at least $r$.

	Let $C_1,C_2,...,C_{n(X)}$ be the connected components of $X(\R)$. We recall an important proposition, together with its proof, as we will use it later.

\begin{prop}\cite[Proposition 3.2]{Harris-Gross}\label{compPic}
	\begin{enumerate}
		\item  $n(X_d)=\displaystyle\sum_{s=0}^{\lfloor d/2 \rfloor} \binom{n(X)}{d-2s}.$
		\item If $n(X)>0$, then $n(W_d^0)=n(X_d).$
		
	\end{enumerate}
\end{prop}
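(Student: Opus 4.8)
The plan is to count the real points of each variety directly and then to compare them through the Abel--Jacobi map $u$. A real point of $X_d$ is a $\sigma$-invariant effective divisor of degree $d$, and every such divisor decomposes uniquely as $D=D_{\R}+D_{\mathrm{conj}}$, where $D_{\R}$ is supported on $X(\R)=C_1\cup\cdots\cup C_{n(X)}$ and $D_{\mathrm{conj}}=\sum_j\bigl(Q_j+\sigma(Q_j)\bigr)$ collects the conjugate pairs of non-real points. To $D$ I attach its parity vector $\epsilon(D)=\bigl(\deg_{C_1}(D)\bmod 2,\dots,\deg_{C_{n(X)}}(D)\bmod 2\bigr)\in(\mathbb{Z}/2\mathbb{Z})^{n(X)}$. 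Since $\deg(D_{\mathrm{conj}})$ is even, the weight $|\epsilon(D)|=\sum_i\epsilon_i$ satisfies $|\epsilon(D)|\equiv d\pmod 2$ and $|\epsilon(D)|\le d$. I would prove that $\epsilon$ is exactly the invariant separating the connected components of $X_d(\R)$.

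For part (1) the heart is the completeness of $\epsilon$. It is locally constant on $X_d(\R)$ because along a continuous family of real divisors the local degree on each $C_i$ can change only by an even amount. For the converse I would deform an arbitrary $D$ with $\epsilon(D)=\epsilon$ to a fixed normal form $N_\epsilon=\sum_{i:\epsilon_i=1}p_i+\tfrac{d-|\epsilon|}{2}\,(q_0+\sigma(q_0))$, with $p_i\in C_i$ and $q_0$ a fixed non-real point. This uses three elementary moves: divisors of fixed degree supported on one circle $C_i$ form a path-connected set (the continuous image of $C_i^{\,m}$); the set of conjugate pairs is path-connected (its image under $Q\mapsto Q+\sigma(Q)$ is connected whether or not $X$ is dividing); and in a local conjugation chart a conjugate pair $\{z,\bar z\}$ deforms through real divisors to the double real point $2\,\mathrm{Re}(z)$ and back. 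Splitting the real part on each $C_i$ into $\epsilon_i$ points plus double points, converting the double points into conjugate pairs, and sliding every conjugate pair onto $\{q_0,\sigma(q_0)\}$ connects $D$ to $N_\epsilon$. Hence the components of $X_d(\R)$ biject with the realizable vectors $\epsilon$, and counting them by weight $w=d-2s$ gives $\sum_{s=0}^{\lfloor d/2\rfloor}\binom{n(X)}{d-2s}$ (terms with $w>n(X)$ vanishing), which is the asserted $n(X_d)$.

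For part (2) I compare $X_d(\R)$ and $W_d^0(\R)$ via $u$. Because $X(\R)\neq\emptyset$, every $\sigma$-invariant $g_d^0$ is represented by a real divisor, so $u\colon X_d(\R)\to W_d^0(\R)$ is onto; a continuous surjection cannot increase the number of components, giving $n(W_d^0)\le n(X_d)$. For the reverse inequality I must show $u$ merges no components, i.e. that $\epsilon$ descends to a locally constant $\Phi$ on $W_d^0(\R)$ with $\Phi\circ u=\epsilon$. Well-definedness of $\Phi$ is precisely \cite{Harris-Gross} Lemma~4.1: if $D\sim D'$ are real then $D'-D=\mathrm{div}(f)$ with $f\in\R(X)^*$, and $\mathrm{div}(f)$ has even degree on each $C_i$, so $\epsilon(D)=\epsilon(D')$. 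Local constancy then follows since $X_d$ is projective, so $X_d(\R)$ is compact and $u$ is a closed continuous surjection onto $W_d^0(\R)$, hence a topological quotient map; the locally constant function $\epsilon=\Phi\circ u$ forces $\Phi$ locally constant. As $\Phi$ attains all $n(X_d)$ realizable values, $W_d^0(\R)$ has at least $n(X_d)$ components, and together with the opposite inequality this yields $n(W_d^0)=n(X_d)$.

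I expect the main obstacle to be this descent step: passing from $\Phi$ being well defined on the fibres of $u$ to $\Phi$ being \emph{locally constant} on the image, which is what excludes two distinct parity classes from being joined inside $W_d^0(\R)$. The completeness argument in part (1) is the other delicate point, since it hinges on the explicit local degeneration of a conjugate pair to a double real point, the device that adjusts the number of conjugate pairs while preserving the parity vector.
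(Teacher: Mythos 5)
Your proposal is correct and takes essentially the same approach as the paper: your parity vector $\epsilon$ is exactly the paper's pseudo-line data, your normal-form deformation plays the role of the paper's covering of $X_d(\R)$ by images of the connected sets $C_{i_1}\times\cdots\times C_{i_k}\times Q^s$, and part (2) rests on the same key fact that $\mathrm{div}(f)$ has even degree on each component of $X(\R)$ for $f\in\R(X)^*$. The only real difference is expository: you spell out the compactness/quotient-map step that the paper leaves implicit when concluding that $u$ does not merge components.
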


\begin{proof}
Let us define $$Q=\{(p,\bar{p})\in X(\C)\times X(\C)\}$$
	
	Consider the connected sets $$U(i_1,...,i_k)=C_{i_1}\times C_{i_2}\times . . . \times C_{i_k}\times Q^s$$ which are mapped into $X_d(\R)$ via the map $s:X^d\rightarrow X_d$ if $d=k+2s$. Since it is the quotient map, it is enough to consider the indices $i_1,...,i_k$ in a non-decreasing order $i_1\leq i_2\leq... \leq i_k$ so that the totality of such sets will cover the image. The intersections occur only when we have an equality at any step, say $i_j=i_{j+1}$. In this case images of the sets $U(i_1,...,i_j,i_{j+1},...,i_k)$ and $U(i_1,...,i_{j-1},i_{j+2},...,i_k)$  will have an intersection point $p_{i_1}+p_{i_2}+...+p_{i_{j-1}}+p+p+p_{i_{j+2}}+...+p_{i_k}$ where $p_{i_l}\in C_{i_l}$ for each $l=1,...,j-1,j+1,...,k$ and $p\in C_{i_j}$.

	Hence in order to find $n(X_d)$ we need to count the number of distinct  indices in increasing order $(i_1,...,i_k)$ such that $d-k\geq 0$ is even and $1\leq i_j\leq n(X)$ for each $j$ between 1 and $k$ (together with an empty set standing for $U(0)=Q^{d/2}$ when $d$ is even). We obtain the following count:
	\begin{align*}
		n(X_d)=&\binom{n(X)}{d}+\binom{n(X)}{d-2}+...+\binom{n(X)}{d-2\lfloor d/2\rfloor }\\
		=&\sum_{s=0}^{\lfloor d/2\rfloor}\binom{n(X)}{d-2s}	
	\end{align*}

We know that the image of $u:X_d\longrightarrow Pic^d(X)$ is $W_d^0$ by \eqref{imageu}. Let us take any $|A|\in W_d^0(\R)$. Since the linear series is real, it must contain a real divisor, say $B$ with $u(B)=|B|=|A|$. This shows that we still have the surjectivity $u\big(X_d(\R)\big)=W_d^0(\R)$ in the level of $u_{\R}:X_d(\R)\longrightarrow Pic^d(X)(\R)$ so that $n(X_d)\geq n(W_d^0)$. 

Let us show that $u$ takes connected components to connected components. Say we have two real divisors $D,E\in X_d(\R)$ such that the images under $u$ coincide, i.e, $|D|=|E|$. Then $D=E+\text{div}(f)$ for some $f\in\R(X)^*$. 

As $\delta(\text{div}(f))=0,$ $D$ and $E$ have the same pseudo-lines. Notice by the above construction that the connected components of $X_d(\R)$ are seperated exactly according to the pseudo-lines of their points. Therefore we may conclude that $D$ and $E$ are on the same component of $X_d(\R)$. This proves the other inequality $n(X_d)\leq n(W_d^0)$.

\end{proof}

We should note that the first part of the above proof follows the proof of \cite[Proposition 3.2]{Harris-Gross} almost verbatim. Second part uses a shorter idea and it better aligns with the overall flow of the current work.

\begin{remark}\label{remark1}
	Let $\varphi:\text{Pic}(X)\rightarrow J(X)$ be the extension of the Abel map $X\rightarrow J(X)$. If we identify the variety $W_d^r$ with its image $\varphi(W_d^r)$ and use the above proof, we can denote the corresponding connected component of $W_d^0(\R)$ inside $J(X$) containing the image of $U(i_1,...,i_k)$ under $\varphi \circ u$ by $V(i_1,...,i_k)$ (and the one containing the image of $Q^{d/2}$ by $V(0)$ when $d$ is even). 
	
	Observe that $\delta(|D|)=k$ for each $\varphi(|D|)\in V(i_1,...,i_k)$ since any such effective divisor $D$ lies in the component containing the image of $C_{i_1}\times..\times C_{i_k}\times Q^{d-2k}$ and all the $i_j's$ are different from each other. Also given such a number $k$, by the above count there exist exactly $\displaystyle\binom{n(X)}{k}$ components of $W_d^0(\R)$ whose points have their $\delta$ equal to $k$.

\end{remark}
\subsection{Trigonal Curves}

Recall that a non-hyperelliptic curve $X$ is called \textit{trigonal} if it admits a degree 3 map $\pi:X\rightarrow\mathbb{P}_{\mathbb{C}}^1$. In this case let $L=\pi^*(\mathcal{O}(1))$ be the trigonal bundle and $T$ be the corresponding $g_3^1$. Suppose that $g\geq 5$,  then $T$ is unique. So, keeping the assumption that $X$ is real, $T$ is a real $g_3^1$ and $\pi$ is defined over $\R$. Using Lemma \ref{deltaineq} we have either $\delta(T)=1$ or $\delta(T)=3$. In the latter case it is well-known that $n(X)=3$ and $g$ is even \cite[Proposition 5.1]{Mon}. In either case assume that $C_1$ is a pseudo-line for $T$ and $C_2,...,C_{n(X)}$ are the other connected components of $X(\R)$.

\begin{lemma}\label{divisorsinT}
	Suppose that $\delta(T)=1$. If $n(X)>1$, then for any integer $i$ with $1< i\leq n(X)$ there exists a real divisor $D=p_i+q_i+\tilde{q}_i\in T$ such that $p_i\in C_1$ and $q_i,\tilde{q}_i\in C_i$. Conversely, any real effective divisor $D \in T$ is either of this form (equality $i=1$ is allowed) or contains a conjugate pair such that $ D=p+q+\overline{q}$ where $p\in C_1, q\in X(\mathbb{C})\backslash X(\mathbb{R})$.
\end{lemma}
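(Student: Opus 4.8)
The plan is to identify the members of $T$ with the fibers of $\pi$ and then read off the shape of a fiber directly from the parity constraints imposed by $\delta(T)=1$. Since $T$ is the complete $g_3^1$ (so $h^0(X_\C,L)=2$), the effective divisors in $T$ are exactly the pullbacks $\pi^*(t)$ for $t\in\mathbb{P}^1_\C$, and because $\pi$ is defined over $\R$ we have $\sigma\bigl(\pi^*(t)\bigr)=\pi^*(\bar t)$. Hence a divisor $\pi^*(t)\in T$ is real precisely when $t\in\mathbb{P}^1(\R)$, and every real effective $D\in T$ is the fiber over a real point of $\mathbb{P}^1$, containing $\pi(y)\in\mathbb{P}^1(\R)$ for each real $y$ in its support.

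Next I would record the parity constraint, which is the whole engine of the argument. Because $\delta$ is well defined on a linear system (two members differ by a principal divisor $\mathrm{div}(f)$, which by \cite[Lemma 4.1]{Harris-Gross} has even degree on every component of $X(\R)$), and because $C_1$ is assumed to be the unique pseudo-line of $T$, every real $D\in T$ satisfies $\deg_{C_1}(D)$ odd and $\deg_{C_j}(D)$ even for all $j>1$. Together with $\deg(D)=3$ and Lemma \ref{deltaineq}, this forces the possible distributions of the three points among the components.

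For the existence statement I would fix $i$ with $1<i\le n(X)$ and pick any point $y\in C_i$, setting $D=\pi^*\bigl(\pi(y)\bigr)\in T$, a real effective divisor of degree $3$ containing $y$. Then $\deg_{C_i}(D)$ is even and at least $1$, hence equal to $2$, so $D$ carries two points $q_i,\tilde q_i\in C_i$ (which one may take distinct by choosing $y$ away from the ramification of $\pi$). The remaining degree is $1$: a single effective point, which must be real since an honestly complex point would drag in its conjugate and overshoot the degree, and which must lie on $C_1$ because $\deg_{C_1}(D)$ is odd while every other component has even degree. This yields the desired $D=p_i+q_i+\tilde q_i$ with $p_i\in C_1$.

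For the converse I would take an arbitrary real effective $D\in T$. Since $\deg(D)=3$ is odd and non-real points occur in conjugate pairs, $D$ has either one or three real points. If $D$ has a conjugate pair $q,\bar q$, the single remaining real point must sit on $C_1$ (any other component would acquire odd degree), giving $D=p+q+\bar q$ with $p\in C_1$. If instead all three points of $D$ are real, the parity constraints put the odd degree on $C_1$ and even degrees elsewhere, so the points split either as all three on $C_1$ (the case $i=1$) or as one point on $C_1$ and two on a single $C_i$; in every case $D$ has the asserted form. I do not expect a genuine obstacle here: the only points requiring care are the bookkeeping of multiplicities at ramification points and the observation that a leftover degree-one piece cannot conceal a conjugate pair, both of which are controlled by the degree-$3$ bound together with the parity of $\delta$.
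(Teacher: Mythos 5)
Your proposal is correct and follows essentially the same route as the paper: existence is obtained by pulling back the fiber $\pi^*(\pi(y))$ through a chosen point $y\in C_i$ and invoking the pseudo-line/parity constraints forced by $\delta(T)=1$, while the converse is the same parity bookkeeping on a real degree-$3$ divisor (the paper phrases it as $\delta(D-p')=0$ for the degree-$2$ remainder after removing the point on $C_1$, which is exactly your case split into a conjugate pair versus two points on one component). The extra observations you include, such as the fiber description of $T$ and the remark about distinctness of $q_i,\tilde q_i$, are harmless refinements of the same argument.
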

\begin{proof}
	Take any point $q\in C_i\subset X(\R)$ for any $i\neq 1$. Since $\pi$ is defined over $\R$, $\pi(q)$ lies in $ \mathbb{P}^1(\R)$. Therefore the divisor $D=\pi^*(\pi(q))\in T$ is an effective, degree 3 real divisor. Any such divisor must contain a point $p\in C_1$ as $C_1$ is a pseudo-line for $T$. By the assumption $\delta(T)=1$ we have $D=p+q+\tilde{q}$ where $\tilde{q}\in C_i$. 
	
	Conversely, any divisor $D\in T$ must also contain a point $p'\in C_1$. So $D-p'$ is a real divisor of degree 2 such that $\delta(D-p')=0$. If $D=p'+q_1+q_2$, either we have $\overline{q_1}=q_2\in X(\mathbb{C})\backslash X(\mathbb{R})$ or $q_1$ and $q_2$ are on the same component of $X(\R).$
\end{proof}

We need the following theorem to make the necessary cohomological calculations.

\begin{teorem}\label{tricoh}\cite[Chapter 1]{MarSch}
	\begin{itemize}
		\item[(1)] $\pi_* K_X\cong\mathcal{O}_{\mathbb{P}^1}(a)\oplus \mathcal{O}_{\mathbb{P}^1}(m)\oplus \mathcal{O}_{\mathbb{P}^1}(-2),$ where $a$ and $m$ are the unique integers satisfying $a\geq m$, $3a\leq 2g-2$ and $a+m=g-2$ .
		\item[(2)]
		For each integer $k$, we have 
		\begin{align*}
		H^0(X,K_X\otimes L^k)\cong &H^0(\mathbb{P}^1,\pi_* K_X\otimes \mathcal{O}_\p(k))\\
		\cong &H^0(\p,\mathcal{O}_\p (a+k))\oplus H^0(\p,\mathcal{O}_\p (m+k))\oplus H^0(\p,\mathcal{O}_\p (k-2)).
	\end{align*}
	\end{itemize}
\end{teorem}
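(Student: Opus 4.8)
The plan is to deduce the cohomological statement (2) formally from the structural statement (1), and to obtain (1) from Grothendieck's splitting theorem on $\p$ together with relative duality for the finite map $\pi$. First I would record that $\pi$ is finite and flat of degree $3$ (a finite map from a smooth curve to the smooth base $\p$ is automatically flat), so $\pi_*\mathcal{O}_X$ is a locally free $\mathcal{O}_{\p}$-module of rank $3$. Since every vector bundle on $\p$ splits into line bundles, and since the unit $\mathcal{O}_{\p}\hookrightarrow\pi_*\mathcal{O}_X$ is split off by $\tfrac13\mathrm{tr}$ (in characteristic $0$ the trace composed with the unit is multiplication by $3$), one can write $\pi_*\mathcal{O}_X\cong\mathcal{O}_{\p}\oplus\mathcal{O}_{\p}(-b_1)\oplus\mathcal{O}_{\p}(-b_2)$ with $b_1\geq b_2$; the $b_i$ are positive because $h^0(\p,\pi_*\mathcal{O}_X)=h^0(X,\mathcal{O}_X)=1$ forces a single trivial summand. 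Applying relative duality for the finite morphism, namely $\pi_*\omega_X\cong\mathcal{H}om_{\mathcal{O}_{\p}}(\pi_*\mathcal{O}_X,\omega_{\p})\cong(\pi_*\mathcal{O}_X)^{\vee}\otimes\mathcal{O}_{\p}(-2)$, I would obtain $\pi_*K_X\cong\mathcal{O}_{\p}(b_1-2)\oplus\mathcal{O}_{\p}(b_2-2)\oplus\mathcal{O}_{\p}(-2)$. Setting $a:=b_1-2$ and $m:=b_2-2$ gives the decomposition with $a\geq m$, and uniqueness of $(a,m)$ is just the uniqueness of the splitting type of the bundle $\pi_*K_X$ on $\p$.

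Next I would pin down the two numerical relations. For $a+m=g-2$ I use that a finite morphism has vanishing higher direct images, so $\chi(X,K_X)=\chi(\p,\pi_*K_X)$; the left side equals $g-1$ by Riemann--Roch on $X$, while the right side equals $(a+1)+(m+1)+(-1)=a+m+1$, and comparing yields $a+m=g-2$. The inequality $3a\leq 2g-2$ is the genuine content (the Maroni bound). My preferred route is an effectivity argument: twisting the decomposition of (1) by $\mathcal{O}_{\p}(-a)$ turns the top summand into $\mathcal{O}_{\p}$, so by the projection formula $h^0(X,K_X\otimes L^{-a})=h^0(\p,\pi_*K_X\otimes\mathcal{O}_{\p}(-a))\geq h^0(\p,\mathcal{O}_{\p})=1$. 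Hence $K_X\otimes L^{-a}$ is effective, so its degree $2g-2-3a$ is non-negative, which is exactly $3a\leq 2g-2$. An equivalent but more geometric derivation embeds $X$ in the scroll $\Sigma_{a-m}=\mathbb{P}\bigl(\mathcal{O}_{\p}(a)\oplus\mathcal{O}_{\p}(m)\bigr)$ as an irreducible trisection; the requirement that $X$ not contain the negative section, together with adjunction, produces the same inequality.

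Finally, for part (2) I would combine three standard facts. Finiteness of $\pi$ gives $R^{i}\pi_*=0$ for $i>0$, hence $H^0(X,\mathcal{F})\cong H^0(\p,\pi_*\mathcal{F})$ for every coherent $\mathcal{F}$; the projection formula gives $\pi_*(K_X\otimes L^{k})=\pi_*\bigl(K_X\otimes\pi^*\mathcal{O}_{\p}(k)\bigr)\cong\pi_*K_X\otimes\mathcal{O}_{\p}(k)$; and substituting the decomposition from (1) yields the displayed splitting of $H^0(X,K_X\otimes L^k)$ into the three summands $\mathcal{O}_{\p}(a+k),\mathcal{O}_{\p}(m+k),\mathcal{O}_{\p}(k-2)$. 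I expect the main obstacle to be the inequality $3a\leq 2g-2$: the splitting, the duality identification, the degree count, and the final cohomological rewriting are all routine once finiteness of $\pi$ and Grothendieck's theorem are invoked, whereas $3a\leq 2g-2$ is the one step that uses the geometry of the trigonal curve essentially. The delicate point is to keep that step non-circular, i.e. to derive it using only the already-established decomposition and the ordering $a\geq m$, which is precisely what the effectivity of $K_X\otimes L^{-a}$ supplies.
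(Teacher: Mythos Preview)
Your argument is correct and standard. The paper does not supply its own proof of this statement but simply cites it from \cite[Chapter 1]{MarSch}; there is nothing to compare against beyond noting that your route via Grothendieck splitting of $\pi_*\mathcal{O}_X$, relative duality for the finite map $\pi$, and the projection formula is the expected one, and your effectivity derivation of the Maroni bound $3a\le 2g-2$ from $h^0(X,K_X\otimes L^{-a})\ge 1$ is clean and non-circular.
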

Here $m$ is called the \textit{Maroni invariant} of the curve $X$ and it satisfies the following inequalities:
\begin{equation}\label{scrltyp}
	0<\dfrac{g-4}{3}\leq m\leq \dfrac{g-2}{2},\hspace{0.5cm} a=g-2-m.
\end{equation}

%

\begin{remark}\label{modulitrigonal}
	Using the isomorphism classes of such curves on $Y\cong \mathbb{P}\bigr(\mathcal{O}_\p (a-m)\oplus\mathcal{O}_\p\bigl)$, it is known that the space of real trigonal curves of genus $g$ and Maroni invariant $m$ is $(g+2m+4)$-dimensional when $\frac{g-4}{3}\leq m<\frac{g-2}{2}$  and $(2g+1)$-dimensional when $m=\frac{g-2}{2}$ \cite{Modtri}.

\end{remark}

Now let us fix $s(d,r,g):=2(d-1)-g-3(r-1)$, $\kappa:=\varphi(K_X)$, $\tau:=\varphi(T)$, where $\varphi:\text{Pic}(X)\rightarrow J(X)$  is the Abel map, and define the following sets:

$$U_d^r=\left\{
\begin{array}{cc}
	r\tau+W_{d-3r}^0, \hspace{0.5cm}\text{if} &d-3r\geq 0\\
	\emptyset, &\text{otherwise}\\
	
\end{array}
\right\},
$$

$$V_d^r=\left\{
\begin{array}{cc}
	\kappa-\bigr((g-d+r-1)\tau+W_{s(d,r,g)}^0\bigl), \hspace{0.5cm}\text{if }& s(d,r,g)\geq 0\\
	\emptyset, \hspace{2 cm} &\text{ otherwise}\\
	
\end{array}
\right\}.
$$
Now we state a crucial theorem that explains the irreducible components of $W_d^r$ for trigonal curves.
\begin{teorem}\cite[Proposition 1]{MarSch}\label{Marsc}
	For $d<g$ and $r\geq1$, we have
	\begin{enumerate}
		\item[(1)] $W_d^r=U_d^r\cup V_d^r$.
		\item[(2)] If $U_d^r\neq \emptyset$, then $U_d^r$ is an irreducible component of $W_d^r(X)$.
		\item[(3)] Suppose that $V_d^r\neq\emptyset$. Then also $U_d^r\neq\emptyset$ and $V_d^r$ is an irreducible component of $W_d^r$ different from $U_d^r$ if and only if $g-d+r-1\leq m$.
	\end{enumerate}
\end{teorem}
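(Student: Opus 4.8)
The plan is to transfer every question to the splitting type of the push-forward $\pi_*\mathcal{O}_X(D)$ on $\p$, exploiting that $\pi$ is finite so that $h^0$ is preserved. Writing $\pi_*\mathcal{O}_X(D)\cong\mathcal{O}_{\p}(b_1)\oplus\mathcal{O}_{\p}(b_2)\oplus\mathcal{O}_{\p}(b_3)$ with $b_1\geq b_2\geq b_3$ and setting $x_+=\max(x,0)$, comparing Euler characteristics on $X$ and $\p$ gives $b_1+b_2+b_3=d-g-2$, while the projection formula yields $h^0(\mathcal{O}_X(D-kT))=\sum_i(b_i-k+1)_+$ for every $k\geq 0$. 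An effective $D$ with $r(D)\geq r\geq 1$ then satisfies $\sum_i(b_i+1)_+\geq 2$, and since $d<g$ forces $b_1+b_2+b_3+3=d-g+1\leq 0$, the possibility $b_3\geq 0$ is excluded. Hence exactly one of two cases occurs: (a) $b_1\geq 0>b_2$, or (b) $b_1\geq b_2\geq 0>b_3$.

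For part (1) I would read off each case directly. In case (a), $b_1\geq r$ gives $h^0(\mathcal{O}_X(D-rT))=(b_1-r+1)_+\geq 1$, so $D\sim rT+B$ with $B\geq 0$ of degree $d-3r$, i.e. $\varphi(D)\in U_d^r$. In case (b) I would pass to the residual through relative duality, $\pi_*\mathcal{O}_X(K_X-D)\cong(\pi_*\mathcal{O}_X(D))^{\vee}\otimes\mathcal{O}_{\p}(-2)$, which sends the splitting type $(b_1,b_2,b_3)$ to $(-b_3-2,-b_2-2,-b_1-2)$ for $K_X-D$; as $b_1,b_2\geq 0$, the residual bundle has at most one non-negative summand, so it is of type (a), and applying the previous paragraph to $K_X-D$ gives $K_X-D\sim(g-d+r-1)T+B'$ with $B'\geq 0$ of degree $s(d,r,g)$, i.e. $\varphi(D)\in V_d^r$. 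A class with $r(D)=r'>r$ lands in $U_d^{r'}\cup V_d^{r'}$, and the inclusions $U_d^{r'}\subseteq U_d^r$ and $V_d^{r'}\subseteq V_d^r$, obtained by absorbing the extra $(r'-r)T$ into the effective part, complete the identity $W_d^r=U_d^r\cup V_d^r$.

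For part (2) I would use that $U_d^r$ and $V_d^r$ are translates of the images $W_{d-3r}^0$ and $W_{s(d,r,g)}^0$ of symmetric products, hence irreducible and closed of dimensions $d-3r$ and $s(d,r,g)$. Since $\dim U_d^r-\dim V_d^r=g-d-1\geq 0$, the set $U_d^r$ cannot lie inside $V_d^r$, so it is a maximal irreducible subset and therefore a component. For part (3), $V_d^r\neq\emptyset$ means $s(d,r,g)\geq 0$, whence $2d-3r\geq g-1\geq d$ and so $d-3r\geq 0$, giving $U_d^r\neq\emptyset$ as claimed.

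The decisive point is whether $V_d^r$ is a \emph{separate} component, that is, whether $V_d^r\not\subseteq U_d^r$. Because $\varphi(D)\in U_d^r$ is equivalent to $D-rT$ being effective and $U_d^r$ is closed, it suffices to test a general $D\in V_d^r$, for which $D-rT\sim K_X-(g-d+2r-1)T-B'$ with $B'$ a general effective divisor of degree $s(d,r,g)$. Splitting $|K_X-(g-d+2r-1)T|$ into fixed part plus base-point-free moving part, a general $B'$ imposes independent conditions on the moving part, so that $h^0(\mathcal{O}_X(D-rT))=\max\bigl(h^0(\mathcal{O}_X(K_X-(g-d+2r-1)T))-s(d,r,g),\,0\bigr)$; hence $V_d^r$ is a distinct component exactly when this quantity vanishes, i.e. when $h^0(\mathcal{O}_X(K_X-(g-d+2r-1)T))\leq s(d,r,g)$. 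Evaluating the latter $h^0$ through the Maroni splitting $\pi_*K_X\cong\mathcal{O}_{\p}(a)\oplus\mathcal{O}_{\p}(m)\oplus\mathcal{O}_{\p}(-2)$ of Theorem~\ref{tricoh} and substituting $a=g-2-m$, a direct but careful computation makes the inequality collapse to $g-d+r-1\leq m$. I expect this final cohomological comparison to be the main obstacle: one must control the base locus of $|K_X-(g-d+2r-1)T|$ so that the independent-conditions count is exact, and verify that the threshold lands precisely at $g-d+r-1=m$ in every numerical regime (distinguishing whether $s(d,r,g)$ does or does not exceed $a-m$). The relative-duality bookkeeping underpinning part (1) is the other technical heart of the argument.
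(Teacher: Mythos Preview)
The paper does not prove this statement: it is quoted from \cite[Proposition~1]{MarSch}, as the citation attached to the theorem indicates, and is invoked thereafter only as a black box. There is therefore no proof in the present paper to compare your proposal against.

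That said, your outline is essentially the standard argument and is close to what Martens--Schreyer themselves do. The push-forward/splitting-type dichotomy for part~(1), together with the relative-duality swap $\pi_*\mathcal{O}_X(K_X-D)\cong(\pi_*\mathcal{O}_X(D))^\vee\otimes\mathcal{O}_{\p}(-2)$, is exactly the right mechanism; the dimension comparison for part~(2) is fine; and for part~(3) your reduction to whether a general effective $B'$ of degree $s(d,r,g)$ annihilates $h^0(K_X-(g-d+2r-1)T)$ is correct. One simplification: your formula $h^0(D-rT)=\max\bigl(h^0(K_X-(g-d+2r-1)T)-s(d,r,g),\,0\bigr)$ for general $B'$ holds without any control on the base locus, since on a curve general points always impose independent conditions until $h^0$ reaches $1$, and one further general point then drops it to $0$; the ``moving part'' decomposition you invoke is not needed. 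The case split you anticipate---according to whether $k:=g-d+2r-1$ is at most $m$, lies strictly between $m$ and $a$, or exceeds $a$---does collapse to the threshold $g-d+r-1\leq m$: in the first and third cases the inequality is automatic (using $s(d,r,g)\geq 0$ for the third), while the middle case gives it on the nose.
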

%
%

\section{Main Results}

For this chapter assume that $X, T, \pi $ and $m$ are as in the previous section. We want to calculate $n(W_d^r)$. If one of $U_d^r$ or $V_d^r$ is empty, then there is nothing to do as the other corresponds to a translate of $W_{d'}^0$ for some $d'>0$ and we know $n(W_{d'}^0)$ very well by Proposition \ref{compPic}. So by Theorem \ref{Marsc} we are interested in the case $g-d+r-1\leq m$ and $s(d,r,g)\geq 0.$

\begin{lemma}\label{intlemma}
	If \hspace{0.2cm}$0<g-d+r-1\leq m,$ then  $$	U_d^r(\R)\cap V_d^r(\R)\neq\emptyset\implies m\leq d-2r-1$$
\end{lemma}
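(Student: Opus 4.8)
The plan is to convert a hypothetical common real point of the two components into an effective divisor lying in a twist of the canonical class by the trigonal bundle, and then to detect the Maroni invariant through the cohomological splitting of Theorem~\ref{tricoh}. First I would unwind the two descriptions. Translation by the real class $r\tau$ carries real points to real points, and every real linear system of degree $d-3r$ contains a real effective representative since $X(\R)\neq\emptyset$; hence a real point of $U_d^r=r\tau+W_{d-3r}^0$ has the form $rT+|B|$ with $B$ real effective of degree $d-3r$. Likewise a real point of $V_d^r$ has the form $K_X-(g-d+r-1)T-|C|$ with $C$ real effective of degree $s(d,r,g)$. Equating the two classes at a common point yields the linear equivalence
$$K_X\sim (g-d+2r-1)\,T+B+C,$$
so $B+C$ is an effective divisor in the class $K_X\otimes L^{-k}$ with $k:=g-d+2r-1$; in particular $h^0\bigl(X,K_X\otimes L^{-k}\bigr)\geq 1$. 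Note that $k\geq 2$, because $g-d+r-1\geq 1$ and $r\geq 1$.

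Next I would apply Theorem~\ref{tricoh}(2) with exponent $-k$, which splits $H^0\bigl(X,K_X\otimes L^{-k}\bigr)$ as $H^0\bigl(\mathcal{O}_\p(a-k)\bigr)\oplus H^0\bigl(\mathcal{O}_\p(m-k)\bigr)\oplus H^0\bigl(\mathcal{O}_\p(-k-2)\bigr)$. The last summand vanishes because $-k-2<0$, so the nonvanishing $h^0\geq 1$ forces $a-k\geq 0$ or $m-k\geq 0$. Substituting $a=g-2-m$ shows that $a-k\geq 0$ is equivalent to $m\leq d-2r-1$, which is exactly the conclusion we want; thus the whole argument reduces to excluding the alternative $m-k\geq 0$, i.e. $m\geq g-d+2r-1$.

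The exclusion is the one genuine obstacle, and it is where the Maroni bound enters. I would argue by contradiction: if the conclusion failed we would have $m\geq d-2r$, so the first summand would vanish and we would be forced into $m\geq g-d+2r-1$. Adding these two inequalities gives $2m\geq g-1$, contradicting the upper Maroni bound $m\leq\frac{g-2}{2}$ from \eqref{scrltyp}. Hence $a-k\geq 0$ after all, and $m\leq d-2r-1$. The only delicate points I expect are bookkeeping ones: guaranteeing that the real effective representatives $B$ and $C$ genuinely exist so that the inequality $h^0\geq 1$ is legitimate, and keeping the sign of the exponent $-k$ straight when invoking Theorem~\ref{tricoh}.
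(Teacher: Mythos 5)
Your proof is correct and follows essentially the same route as the paper: a common real point of $U_d^r(\R)$ and $V_d^r(\R)$ yields a real effective divisor in the class $K_X-(g-d+2r-1)T$, and the splitting of Theorem~\ref{tricoh}(2) then forces $m\leq d-2r-1$. Your final exclusion step (adding $m\geq d-2r$ and $m\geq g-d+2r-1$ to contradict $2m\leq g-2$) is merely a rephrasing of the paper's direct observation that $a\geq m$ makes the first summand dominate, since $a\geq m$ is exactly the inequality $2m\leq g-2$.
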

\begin{proof}
		First note that degree of $K_X-(g-d+2r-1)T$ is $2g-2-3(g-d+2r-1)=3d-g-6r+1$. Since $a\geq m$ by Theorem \ref{tricoh} we obtain 	$h^0(X,K_X-(g-d+2r-1)T)\geq h^0(\p,\mathcal{O}(a-g+d-2r+1))$ so that
\begin{equation}\label{md2r-1}
		\begin{split}
		h^0(X,K_X-(g-d+2r-1)T)>0 &\iff a-(g-d+2r-1)\geq 0\\
		&\iff m\leq d-2r-1.
	\end{split}
\end{equation}

	Using the descriptions of these components we have
	\begin{equation}\label{D1+D2=D3}
\begin{split}
			U_d^r(\R)\cap V_d^r(\R)\neq\emptyset\iff& rT+D_1\sim K_X-(g-d+r-1)T-D_2\\
		&\text{ for some } D_1\in X_{d-3r}(\R) \text{ and }D_2\in X_{2(d-1)-g-3(r-1)}(\R)\\
\iff & K_X-(g-d+2r-1)T\sim D_1+D_2\\
\end{split}
	\end{equation} 
If this happens then $h^0(X,K_X-(g-d+2r-1)T)>0$, i.e, $m\leq d-2r-1$ by the above calculations.

\end{proof}
\begin{remark}
	Notice that the converse direction in the proof of Lemma \ref{intlemma} is correct if we can find a real divisor $D\sim K_X-(g-d+2r-1)T$ such that $D=D_1+D_2$ for some $D_1\in X_{d-3r}(\mathbb{R})$ and $D_2\in X_{2d-g-3r+1}(\mathbb{R})$.  In most cases this will be possible and we will use this lemma in both ways. However, if for example $d-3r\not\equiv\deg(D)\equiv 0 \hspace{0.3cm}\text {mod }2$ and $D$ does not contain any real points then it may not be possible to decompose D in this way.
\end{remark}
\begin{lemma} \label{g-d=m}
	If $\text{ }0<g-d+r-1\leq m\leq d-2r-1,$ then
	$$\kappa-(g-d+2r-1)\tau\in U_{3d-g-6r+1}^{2d-g-3r}(\R)\iff g-d+r-1=m$$
\end{lemma}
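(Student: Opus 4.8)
The plan is to unwind the definition of the set $U_{3d-g-6r+1}^{2d-g-3r}$ and reduce the membership question to the (non-)effectivity of a single twisted canonical class, which is then decided by a direct cohomology computation using Theorem \ref{tricoh}. Writing $d'=3d-g-6r+1$ and $r'=2d-g-3r$, one computes $d'-3r'=2g-3d+3r+1$, so by definition $U_{d'}^{r'}=r'\tau+W_{2g-3d+3r+1}^0$. Since $\tau$ and $W^0$ are defined over $\R$, translation by the real class $r'\tau$ is a real isomorphism, and therefore
$$\kappa-(g-d+2r-1)\tau\in U_{d'}^{r'}(\R)\iff \kappa-(g-d+2r-1)\tau-r'\tau\in W_{2g-3d+3r+1}^0(\R).$$
A short bookkeeping gives $g-d+2r-1+r'=d-r-1$, so the right-hand class is $\kappa-(d-r-1)\tau=\varphi\bigl(K_X-(d-r-1)T\bigr)$, a real class of degree $2g-3d+3r+1$. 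As this class is automatically $\sigma$-invariant, it lies in $W^0_{2g-3d+3r+1}(\R)$ exactly when it is effective over $\C$ (here $X(\R)\neq\emptyset$ guarantees a real effective representative). Thus the whole statement collapses to
$$\kappa-(g-d+2r-1)\tau\in U_{d'}^{r'}(\R)\iff h^0\bigl(X,K_X-(d-r-1)T\bigr)>0.$$

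Next I would evaluate $h^0(X,K_X-(d-r-1)T)$ by applying Theorem \ref{tricoh}(2) with the twist $k=r-d+1$, so that $K_X\otimes L^{k}=K_X-(d-r-1)T$. Recalling $a=g-2-m$, this expresses the dimension as
$$h^0(\p,\mathcal{O}_\p(g-m-d+r-1))+h^0(\p,\mathcal{O}_\p(m-d+r+1))+h^0(\p,\mathcal{O}_\p(r-d-1)).$$
The last two summands vanish: the hypothesis $m\leq d-2r-1$ forces $m-d+r+1\leq -r<0$, and $d>2r+1$ (which follows from $0<m\leq d-2r-1$) gives $r-d-1<0$, so each of these line bundles has no sections. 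Hence $h^0(X,K_X-(d-r-1)T)=h^0(\p,\mathcal{O}_\p(g-m-d+r-1))$, and everything is governed by the single integer $g-m-d+r-1=(g-d+r-1)-m$.

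Finally, the hypothesis $g-d+r-1\leq m$ says precisely that this integer is $\leq 0$. Therefore $h^0(\p,\mathcal{O}_\p(g-m-d+r-1))>0$ if and only if $g-m-d+r-1=0$, i.e. $g-d+r-1=m$, which is the desired equivalence. I expect the only delicate points to be the index arithmetic of the first paragraph (in particular the identities $g-d+2r-1+r'=d-r-1$ and $d'-3r'=2g-3d+3r+1$) and the justification that a $\sigma$-invariant effective class genuinely furnishes a real point of $W^0$; note also that in the strict case $m>g-d+r-1$ the degree $2g-3d+3r+1$ may become negative, making $U_{d'}^{r'}$ empty, but this is consistent with the conclusion, since then the class is certainly not effective.
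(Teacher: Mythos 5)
Your proposal is correct and follows essentially the same route as the paper: both reduce membership in $U_{3d-g-6r+1}^{2d-g-3r}(\R)$ to the condition $h^0\bigl(X,K_X-(d-r-1)T\bigr)>0$ (using that a $\sigma$-invariant effective class has a real representative since $X(\R)\neq\emptyset$), then compute this $h^0$ via Theorem~\ref{tricoh}, kill the two lower summands using $m\leq d-2r-1$, and conclude from $a-(d-r-1)\geq 0\iff g-d+r-1\geq m$ combined with the hypothesis $g-d+r-1\leq m$. Your added care about the translation being a real isomorphism and about the degenerate case $2g-3d+3r+1<0$ is a small bonus, but the substance is identical to the paper's argument.
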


\begin{proof}
Note that $\kappa-(g-d+2r-1)\tau$ lies in $W_{3d-g-6r+1}^{a-g+d-2r+1}(\R)$ by \eqref{md2r-1}. Also we may write by assumption that $a-g+d-2r+1=g-2-m-g+d-2r+1\geq g-2-(g-d+r-1)-g+d-2r+1=2d-g-3r$. Hence, $\kappa-(g-d+2r-1)
\tau\in W_{3d-g-6r+1}^{2d-g-3r}(\R).$ Notice that $$\kappa-(g-d+2r-1)\tau\in U_{3d-g-6r+1}^{2d-g-3r}(\R)$$ exactly when $K_X-(g-d+2r-1)T\sim(2d-g-3r)T+D$ for some real $D\geq 0$, i.e when $h^0(X,K_X-(g-d+2r-1+2d-g-3r)T))=h^0(X,K_X-(d-r-1))>0$. 
Using Theorem \ref{tricoh} again, we have
\begin{equation*}
\begin{split}
		h^0(X,K_X-(d-r-1)T)=&h^0(\p,\mathcal{O}_\p (a-(d-r-1)))\\
		&\hspace{0.5cm}+ h^0(\p,\mathcal{O}_\p (m-(d-r-1)))\\
		&\hspace{0.5 cm} +h^0(\p,\mathcal{O}_\p (-2-(d-r-1)))\\
\end{split}
\end{equation*} 
Since $m\leq d-2r-1$ by assumption, we obtain $-2-(d-r-1)<m-(d-r-1)\leq d-2r-1-(d-r-1)=-r<0$ so that 
$$	h^0(X,K_X-(d-r-1)T)=h^0(\p,\mathcal{O}_\p (a-(d-r-1))).$$

Summarizing, we may write \begin{equation*}
\begin{split}
		\kappa-(g-d+1)\tau\in U_{3d-g-5}^{2d-g-3r}(\R)\iff&a-d+r+1\geq 0. \\
		\iff& g-d+r-1\geq m
\end{split}
\end{equation*} Under the hypothesis $g-d+r-1\leq m$ this is equivalent to saying that $g-d+r-1=m$.
\end{proof}

\begin{remark}\label{g-kt-basepts}
	So, under the assumptions $g-d+r-1=m\leq d-2r-1$ we have \begin{equation*}
	K_X-(g-d+2r-1)T\sim (2d-g-3r)T+(2g-3d+3r+1)\text{-many base points}.
\end{equation*}
 Composing with \eqref{D1+D2=D3}, in order to find the intersecting components of $U_d^r(\R)$ and $V_d^r(\R)$ it is enough to keep track of effective real divisors belonging to the  class $(2d-g-3r)T$ as long as $2g-3d+3r+1$ is small. 
\end{remark}
\begin{lemma}\label{deltanot3}
	If $g-d+r-1=m\leq d-2r-1$ and $2g-3d+3r+1=1$, then $\delta(T)=1$.
\end{lemma}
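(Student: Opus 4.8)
The plan is to rule out the only competing value of $\delta(T)$. Applying Lemma~\ref{deltaineq} to the pencil $T$ (a $g^1_3$, so $d=3$) gives $\delta(T)\le 3$ and $\delta(T)\equiv 3\equiv 1\pmod 2$, hence $\delta(T)\in\{1,3\}$. I would therefore argue by contradiction, assuming $\delta(T)=3$, and derive an impossible parity relation on the connected components of $X(\mathbb{R})$.

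The next step is to convert the numerical hypotheses into a concrete divisor relation. Because $2g-3d+3r+1=1$, Remark~\ref{g-kt-basepts} yields $K_X-(g-d+2r-1)T\sim(2d-g-3r)T+P$, where the fixed part $P$ is a single base point; collecting the multiples of $T$ this rearranges to $K_X\sim(d-r-1)T+P$. The point I must justify carefully is that $P$ is \emph{real}: the class $K_X-(g-d+2r-1)T$ is real and its mobile part $(2d-g-3r)T$ is real (being pulled back from $\mathbb{P}^1$), so the fixed divisor $P$ is $\sigma$-invariant of degree one, hence a single point of $X(\mathbb{R})$ lying on exactly one component, say $C_j$. (A degree-one effective $\sigma$-invariant divisor cannot be a conjugate pair.)

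The heart of the argument is then a component-by-component parity count using the quantities $P_{C_i}$ from the proof of Lemma~\ref{deltaineq}, which depend only on the linear equivalence class and are additive mod $2$ since any $\operatorname{div}(f)$ has even degree on each component. From $\delta(K_X)=0$ we get $P_{C_i}(K_X)=0$ for every $i$, and $K_X\sim(d-r-1)T+P$ gives $P_{C_i}(P)\equiv(d-r-1)\,P_{C_i}(T)\pmod 2$ for all $i$. Now $P$ is odd on exactly one component, so its parity vector has weight $1$; on the other hand $(d-r-1)T$ is odd on no component (if $d-r-1$ is even) or on exactly the three components where $T$ is odd (if $d-r-1$ is odd, using $\delta(T)=3$), so its weight is $0$ or $3$. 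Since $1\notin\{0,3\}$ these two parity vectors cannot coincide, a contradiction. Hence $\delta(T)\ne 3$, leaving $\delta(T)=1$.

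The main obstacle I anticipate is the reality of the single base point $P$ and the identification of the mobile part as exactly $(2d-g-3r)T$; both follow from the cohomological computation of $h^0\bigl(K_X-(g-d+2r-1)T\bigr)$ already carried out via Theorem~\ref{tricoh} in the proofs of Lemmas~\ref{intlemma} and~\ref{g-d=m}. I note that the cited fact $\delta(T)=3\Rightarrow n(X)=3$ is convenient but not strictly necessary here, since the weight comparison above forces the contradiction regardless of the total number of components.
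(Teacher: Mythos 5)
Your proof is correct and follows essentially the same route as the paper: both extract the relation $K_X-(g-d+2r-1)T\sim(2d-g-3r)T+p$ with a single \emph{real} base point (via Remark~\ref{g-kt-basepts}) and then rule out $\delta(T)=3$ by a mod-2 parity count of degrees on the components of $X(\mathbb{R})$. Your packaging—rearranging to $K_X\sim(d-r-1)T+P$ and comparing parity vectors of weight $1$ against weight $0$ or $3$—simply avoids the paper's case split on the parity of $g+d$, but the underlying idea is identical.
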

\begin{proof}
Since the number of base points in $K_X-(g-d+2r-1)T$ is $2g-3d+3r+1=1,$  we have the equivalence $$K_X-(g-d+2r-1)T\sim(2d-g-3r)T+p$$ for some fixed $p\in X$. As $K_X$ and $T$ are real, $p$ is in $X(\R)$. Note by the assumption $2g-3d+3r=0$ that $d$ and $r$ have the same parity. Then we may write $$g-d+2r-1\not\equiv 2d-g-3r \hspace{0.3cm}(\text{mod } 2).$$
Together with this, $\delta(K_X)=0$ allows us to write
	
	\begin{equation*}	\delta\left(K_X-(g-d+2r-1)T\right)=
		\begin{cases}
			\delta(T),& \text{if } g+d \text{ is even}\\
			0,& \text{if } g+d \text{ is odd}\\
		\end{cases}
	\end{equation*}
	
	and 
	\begin{equation*}	\delta((2d-g-3r)T)=
		\begin{cases}
			\delta(T),& \text{if } g+d \text{ is odd}\\
			0,& \text{if } g+d \text{ is even.}\\
		\end{cases}
	\end{equation*}

	Since adding one point to a divisor can change its $\delta$ by $+1$ or $-1$, if $\delta(T)=3$ the equivalence $K_X-(g-d+2r-1)T\sim (2d-g-3r)T +p$ creates a contradiction.
\end{proof}

	Recall  by Remark \ref{remark1} that the connected components of $U_d^r(\R)=r\tau+W_{d-3r}^0(\R)$ are of the form $$\tau + V(i_1,...,i_k),$$ where $1\leq i_1<i_2<...i_k\leq n(X)$, $d-3r-k\geq0$ and $d-3r\equiv k$ mod $2$. Similarly the components of $V_d^r(\R)=\kappa-\big((g-d+r-1)\tau+W_{s(g,r,d)}^0(\R)\big)$ are  $$\kappa-\bigl((g-d+r-1)\tau +V'(j_1,...,j_l)\bigr)$$

for each $1\leq j_1<j_2<...<j_l\leq n(X)$, $s(d,r,g)-l\geq 0$ and $s(d,r,g)\equiv l$ mod $2$,  where $s(d,r,g)=2(d-1)-g-3(r-1).$ (We use the notation $V'$ to avoid the confusion with the connected components of $U^r_d(\R)$) 

Since we have the equalities $W_d^r(\R)=U_d^r(\R)\cup V_d^r(\R)$, $n(U_d^r)=n(W_{d-3r}^0)$ and $n(V_d^r)=n(W_{s(d,r,g)}^0)$, we obtain the immediate bounds
\begin{align}\label{nwrd}
	0\leq n(W_d^r)\leq \sum_{s=0}^{\lfloor \frac{d-3r}{2}\rfloor}\binom{n(X)}{d-3r-2s} + \sum_{s=0}^{\lfloor \frac{s(d,r,g)}{2}\rfloor}\binom{n(X)}{s(d,r,g)-2s}.
\end{align}

\begin{lemma}\label{nocompofU}
	No component of $U_d^r(\R)$ can intersect with two distinct components of $V_d^r(\R)$ and vice versa.
\end{lemma}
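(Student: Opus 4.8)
The plan is to exploit the parities of the partial degrees $\deg_{C_i}$ along the connected components of $X(\R)$, which — unlike the divisors themselves — are additive and insensitive to real linear equivalence. First I would recall from the computation \eqref{D1+D2=D3} in Lemma \ref{intlemma} that a point in the intersection of a component of $U_d^r(\R)$ with a component of $V_d^r(\R)$ amounts to an effective real divisor in the \emph{fixed} class $M:=K_X-(g-d+2r-1)T$ together with a splitting $D=D_1+D_2$, where $D_1\in X_{d-3r}(\R)$ lies in the component $V(i_1,\dots,i_k)$ labelling the $U$-component and $D_2\in X_{s(d,r,g)}(\R)$ lies in the component $V'(j_1,\dots,j_l)$ labelling the $V$-component. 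By Remark \ref{remark1}, the index set $\{i_1,\dots,i_k\}$ is exactly the set of pseudo-lines of $D_1$, and likewise $\{j_1,\dots,j_l\}$ is the set of pseudo-lines of $D_2$; moreover each component of $U_d^r(\R)$ (resp. $V_d^r(\R)$) is uniquely determined by this index set.

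The key step will be the following observation. For each connected component $C$ of $X(\R)$ write $P_C(\cdot)\in\{0,1\}$ for the parity of $\deg_C(\cdot)$, as in the proof of Lemma \ref{deltaineq}. Since $\deg_C$ is additive and, by \cite[Lemma 4.1]{Harris-Gross}, every principal divisor $\operatorname{div}(f)$ with $f\in\R(X)^*$ has even degree on each $C$, the quantity $P_C$ is additive in the divisor and depends only on the real linear equivalence class (this is the componentwise refinement of the fact $\delta(\operatorname{div}(f))=0$ already used in the proof of Proposition \ref{compPic}). Applying this to $D=D_1+D_2\sim M$ gives, for every $C$,
\begin{equation*}
P_C(D_1)+P_C(D_2)\equiv P_C(M)\pmod 2,
\end{equation*}
where $P_C(M)$ is a fixed quantity attached to the class $M$. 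Hence $P_C(D_2)\equiv P_C(M)+P_C(D_1)\pmod 2$, so the pseudo-lines of $D_2$ are completely determined by those of $D_1$, and conversely. In other words the index set $\{j_1,\dots,j_l\}$ is forced by $\{i_1,\dots,i_k\}$, so a fixed $U$-component can meet at most one $V$-component; exchanging the roles of $D_1$ and $D_2$ yields the symmetric statement.

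I should note that the index set for $D_2$ produced this way is automatically admissible: because it arises from a genuine effective real divisor $D_2\in X_{s(d,r,g)}(\R)$, its cardinality $l=\delta(D_2)$ satisfies $l\leq s(d,r,g)$ and $l\equiv s(d,r,g)\pmod 2$ by Lemma \ref{deltaineq}, so no separate feasibility check is needed. The only point requiring care — and the step I expect to be the main obstacle — is to insist on the \emph{individual} parities $P_{C_i}$ rather than merely their sum $\delta$: it is the invariance of each $P_{C_i}$ (not just of $\delta$) under real linear equivalence that pins down the partner component uniquely, and this rests squarely on the componentwise evenness of $\operatorname{div}(f)$. Finally one should confirm that labelling components by pseudo-line sets survives the translations defining $U_d^r(\R)$ and $V_d^r(\R)$; but since translation by a fixed class and the involution $x\mapsto -x$ are homeomorphisms of the relevant real loci, they carry connected components to connected components bijectively, so the identification of Remark \ref{remark1} is unaffected.
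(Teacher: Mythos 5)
Your proof is correct and follows essentially the same route as the paper's: both arguments reduce an intersection point to a real decomposition $D_1+D_2\sim K_X-(g-d+2r-1)T$ via \eqref{D1+D2=D3}, and then invoke the invariance of the componentwise parities $P_{C}$ under real linear equivalence (evenness of $\text{div}(f)$ on each component of $X(\R)$) together with Remark \ref{remark1}'s labelling of components by pseudo-line sets to pin down the partner component uniquely. The only cosmetic difference is that the paper compares two intersection points and subtracts, obtaining $D_1-D_3\sim D_4-D_2$ with all componentwise degrees even, whereas you read off the forced parities of $D_2$ directly from the fixed class $M$ and those of $D_1$.
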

\begin{proof}
	Assume to the contrary that $\tau+V(i_1,...,i_k)$ intersects simultaneously with $\kappa-\bigl((g-d+r-1)\tau+V'(j_{1_1},...,j_{l_1})\bigr)$ and $\kappa-\bigl((g-d+r-1)\tau +V'(j_{1_2},...,j_{l_2})\bigr)$. Then there exist effective real divisors $D_1,D_2,D_3$ and $D_4$ with $D_1,D_3\in U(i_1,...,i_k)$, $D_2\in U(j_{1_1},...,j_{l_1}), D_4\in U(j_{1_2},...,j_{l_2})$ such that  $$D_1+D_2\sim D_3+D_4\sim K_X-(g-d+2r-1)T$$
by \eqref{D1+D2=D3}. It yields $$D_1-D_3\sim D_4-D_2.$$
	Since $D_1$ and $D_3$ are on the same component $U(i_1,...,i_k)$ of $X_{d-3r}(\R)$, their pseudo-lines are exactly the same. This implies $$\delta(D_1-D_3)=0=\delta(D_4-D_2)$$ so that $\delta(D_4)=\delta(D_2).$ Indeed, $D_4$ and $D_2$ share the same pseudo-lines, i.e, they are on the same component of $X_{s(d,r,g)}(\R)=X_{2d-g-3r+1}(\R)$. Hence, we get $$j_{1_1}=j_{1_2},...,j_{l_1}=j_{l_2}$$
	
	such that two components $\kappa-\bigl((g-d+r-1)\tau+V'(j_{1_1},...,j_{l_1})\bigr)$ and $\kappa-\bigl((g-d+r-1)\tau +V'(j_{1_2},...,j_{l_2})\bigr)$ coincide.
	
	Using the same method, we can say that no component of $V_d^r(\R)$ can intersect with two distinct components of $U_d^r(\R)$. \end{proof}
Using Lemma \ref{nocompofU} we can upgrade the lower bound in \eqref{nwrd} for $n(W_d^r)$.
\begin{corollary}\label{cor1}
If $g-d+r-1\leq m$, then $$n(W_d^r)\geq\sum_{s=0}^{\lfloor \frac{d-3r}{2}\rfloor}\binom{n(X)}{d-3r-2s}.$$
\end{corollary}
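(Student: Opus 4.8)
The plan is to prove the lower bound by showing that, when we form the union $W_d^r(\R)=U_d^r(\R)\cup V_d^r(\R)$, two distinct connected components of $U_d^r(\R)$ can never be glued into a single component of $W_d^r(\R)$. Granting this, every component of $U_d^r(\R)$ survives inside a distinct component of $W_d^r(\R)$, whence $n(W_d^r)\geq n(U_d^r)$. Since we already have $n(U_d^r)=n(W_{d-3r}^0)$ and Proposition \ref{compPic} evaluates the latter as $\sum_{s=0}^{\lfloor (d-3r)/2\rfloor}\binom{n(X)}{d-3r-2s}$, this is exactly the claimed bound.

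To make the gluing precise I would encode the intersection pattern as a bipartite graph $G$: its vertices are the connected components of $U_d^r(\R)$ together with those of $V_d^r(\R)$, and I join two of them by an edge precisely when the corresponding sets meet. Because distinct components of $U_d^r(\R)$ are pairwise disjoint, and likewise for $V_d^r(\R)$, every edge of $G$ runs between a $U$-vertex and a $V$-vertex, so $G$ is genuinely bipartite. Using the standard fact that a finite union of closed connected sets is connected exactly when its intersection graph is connected, the connected components of $W_d^r(\R)$ are in bijection with the connected components of $G$.

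At this point I would invoke Lemma \ref{nocompofU}, whose two halves say that no $U$-vertex meets two distinct $V$-vertices and no $V$-vertex meets two distinct $U$-vertices; in other words, every vertex of $G$ has degree at most one. Hence each connected component of $G$ is either an isolated vertex or a single edge, and in either case it contains at most one $U$-vertex. The assignment sending each component of $U_d^r(\R)$ to the component of $G$ containing it is therefore injective, so $G$, and with it $W_d^r(\R)$, has at least $n(U_d^r)$ connected components. The only delicate point is precisely this bookkeeping: one must be certain that no chain of intersections routed through $V_d^r(\R)$ can link two $U$-components, and this is exactly what the degree bound extracted from Lemma \ref{nocompofU} forbids, so beyond this observation no further estimate or cohomological input is required.
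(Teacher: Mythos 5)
Your proof is correct and takes essentially the same route as the paper: both arguments rest on Lemma \ref{nocompofU} to guarantee that distinct components of $U_d^r(\R)$ cannot be linked through $V_d^r(\R)$, so each one lies in a distinct component of $W_d^r(\R)$, giving $n(W_d^r)\geq n(U_d^r)=n(W_{d-3r}^0)$, which Proposition \ref{compPic} evaluates as the stated sum. Your bipartite intersection-graph formulation is only a tidier packaging of the paper's informal bookkeeping (the paper also records $n(W_d^r)\geq\max\{n(U_d^r),n(V_d^r)\}$ and compares $n(U_d^r)$ with $n(V_d^r)$, a step you correctly observe is not needed for the claimed bound).
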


\begin{proof}
By Lemma \ref{nocompofU} we may say that for each connected component $U$ of $U_d^r(\R)$ there can be at most one connected component $V$ of $V_d^r(\R)$ such that $V\cap U\neq\emptyset$ and $V$ has no non-empty intersection with other components of $U_d^r(\R)$. So, for each $U$ there is unique connected component of $W_d^r(\R)$ containing $U$. The same holds for the connected components of $V_d^r(\R)$ as well. This proves the formula
	\begin{align*}
		n(W_d^r)\geq \max\{n(U_d^r),n(V_d^r)\}.
	\end{align*}
	Since $g-d$ is positive, we have  $d-3r\geq 2d-g-3r+1=s(d,r,g)$. This shows that $$n(U_d^r)=\sum_{s=0}^{\lfloor \frac{d-3r}{2}\rfloor}\binom{n(X)}{d-3r-2s}\geq \sum_{s=0}^{\lfloor \frac{s(d,r,g)}{2}\rfloor}\binom{n(X)}{s(d,r,g)-2s}=n(V_d^r).$$
	
	The claim then follows.	 \end{proof}

The inequalities found in \eqref{nwrd} could be likely related to the intrinsic geometry of the curve itself. Let us see the following particular example.

\begin{example}
	Consider a trigonal curve $X$ of genus 5. We have $m=1$ by \eqref{scrltyp}. We will calculate $n(W_4^1)$. We have the bounds given in \eqref{nwrd} and Corollary \ref{cor1} that yield $$n(X)\leq n(W_4^1)\leq 2n(X).$$
	Take any real $T\in g_3^1$. Then by the Riemann-Roch formula, $$h^0(X_{\C},K_X(-T))=h^0(X_{\C},\mathcal{O}(T))+g-\deg(T)-1=2+5-3-1=3$$ so that $K_X-T$ defines a real $g_5^2$. Note that this $g_5^2$ cannot have a base point, otherwise we would get a $g_4^2$ but this contradicts with Clifford's theorem \cite[Page 107]{123} since a trigonal curve of genus $g\geq5$ cannot possess a hyperelliptic linear system (a  $g_2^1$)  by Theorem \ref{Marsc}.
	
	 In this case $K_X-T$ defines a real map $$\phi:X\longrightarrow \mathbb{P}_{\C}^2$$ such that the image $\phi(X)\in\mathbb{P}_{\C}^2$ is a plane curve of degree 5. The equality $g=5=\frac{(5-1)(5-2)}{2}-1$ implies by the degree-genus formula that $\phi(X)$ has only one singularity, a cusp or a node \cite[Page 208]{123}. This means that there exists exactly one pair of points $p_0,q_0\in X$ such that $$r(K_X-T-p_0-q_0)=r(K_X-T)-1=1.$$

	Since $g^1_3$ is unique and $\deg(K_X-T-p_0-q_0)=3$, we have $K_X-T-p_0-q_0\sim T$, i.e, $K_X-2T\sim p_0+q_0$. As this pair is the unique one such and both $K_X$ and $T$ are real, the divisor $p_0+q_0$ is a real divisor. Moreover, $\delta(p_0+q_0)=\delta(K_X-2T)=0.$ 
	
	On the other hand, $W_4^1$ has two irreducible components $U_4^1=\tau+W_1^0$ and $V_4^1=\kappa-(\tau+W_1^0)$ both of which are isomorphic to $X$.  Using the same method applied in \eqref{D1+D2=D3}, two connected components of $U^1_4(\R)$ and $V_4^1(\R)$ intersect if and only if there are points $p,q\in X(\R)$ such that $T+p\sim K_X-T-q$. But one can see that there are only two pairs satisfying such a linear equivalence. We may get at most two possible intersection points from the choices $(p,q)=(p_0,q_0)$ and $(p,q)=(q_0,p_0)$ depending on the reality of the points $p_0$ and $q_0$.
	
	Since $g$ is odd, $\delta(T)=1$ in this case. We break the rest into two cases:
	
	If the singularity above is a cusp or real node then $p_0,q_0\in X(\R)$ so that  $U_4^1(\R)\cap V_4^1(\R)\neq \emptyset$. 
	Since $\delta(p_0+q_0)=0,$ we have $p_0,q_0\in C_i$ for some $i=1,...,n(X).$ Then $\tau +V(i)$  intersects with $\kappa-(\tau +V'(i))$. In this case we get $n(W_4^1)=2n(X)-1$.

	If the singularity of $\phi(X)$ is a complex node, then $p_0=\overline{q_0}\notin X(\R)$ and $U_4^1(\R)\cap V_d^1(\R)=\emptyset$.  Hence, $n(W_4^1)=2n(X).$ 
\end{example}

We also want to improve the upper bound in \eqref{nwrd}. Suppose that the conditions $g-d+r-1=m$ and $m<d-2r-1$ hold. Then we can use Remark \ref{g-kt-basepts}.
Our strategy is to find different components of $X_{d-3r}(\R)$ and $X_{2d-g-3r+1}(\R)$ on which there are divisors $D_1$ and $D_2$, respectively, such that $$rT+D_1\sim K_X-(g-d+r-1)T-D_2,$$ equivalently, 
\begin{align}\label{D_1+D_2simT}
	D_1+D_2\sim K_X-(g-d+2r-1)T\sim (2d-g-3r)T+D_3
\end{align}
for some fixed $D_3\in X_{2g-3d+3r+1}(\R)$. Note that the value of $2g-3d+3r+1$ is always non-negative due to the bound $\frac{g-4}{3}\leq m$ given in \eqref{scrltyp}:
\begin{align*}
	2g-3d+3r+1=2g-3(g-m+r-1)+3r+1\geq 2g-3\left(g-\frac{g-4}{3}\right)+4=0.
\end{align*}

%

\begin{teorem}\label{teoremim}
\teotext
\end{teorem}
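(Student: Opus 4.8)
The left-hand inequality needs no new work: the hypotheses give $g-d+r-1\le m$, and since $a_0:=2d-g-3r=(d-2r-1)-m\ge0$ we have $s(d,r,g)=2d-g-3r+1\ge1$, so $V_d^r$ is nonempty; by Theorem~\ref{Marsc} this makes $U_d^r$ nonempty (hence $d-3r\ge0$) and a distinct component. Corollary~\ref{cor1} then already yields $n(W_d^r)\ge s_{n(X)}(d-3r)$, so the entire content is the upper bound, which I would organise around the matching of Lemma~\ref{nocompofU}.

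Consider the bipartite graph whose vertices are the connected components of $U_d^r(\R)$ and of $V_d^r(\R)$ and whose edges join two components that meet. By Lemma~\ref{nocompofU} this graph is a matching, so it contains no chains and every connected component of $W_d^r(\R)=U_d^r(\R)\cup V_d^r(\R)$ is either a lone component of one side or a meeting pair; hence
$$n(W_d^r)=n(U_d^r)+\#\{\text{components of }V_d^r(\R)\text{ meeting no component of }U_d^r(\R)\}.$$
By Proposition~\ref{compPic}, $n(U_d^r)=n(W_{d-3r}^0)=s_{n(X)}(d-3r)$, so everything reduces to showing that the number of un-hit components of $V_d^r(\R)$ is at most $\binom{n(X)-1}{2d-g-3r+1}$.

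To control them I would make the splitting in \eqref{D1+D2=D3} explicit. The target degree is $\deg D_2=s(d,r,g)=a_0+1$. By Remark~\ref{g-kt-basepts} fix $K_X-(g-d+2r-1)T\sim a_0T+B$ with $B\ge0$ of degree $2g-3d+3r+1$; then the component of $V_d^r(\R)$ indexed by a pseudo-line set $J$ is hit precisely when some effective real $D_2$ with pseudo-lines $J$ and $\deg D_2=a_0+1$ admits $D_2\le A+B$ for a suitable $A\in|a_0T|$. Taking $A$ to be a sum of $a_0$ real members of $T$ and using $\delta(T)=1$ with pseudo-line $C_1$, Lemma~\ref{divisorsinT} lets each member meet $C_1$ once and a component of my choice twice. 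The pivotal numerical fact is that $\deg D_2=a_0+1$ is exactly one more than the number of members: these $a_0$ members meet $C_1$ together with at most $a_0$ further components, so I can carve a $D_2\le A$ realising any prescribed $J$ as soon as $C_1\in J$ or $|J|<a_0+1$, absorbing the parity slack as surplus even degree on a single auxiliary component. Every such $V$-component is then hit, and the only possibly un-hit ones have $|J|=a_0+1$ and $C_1\notin J$; for these $D_2$ is forced to be a reduced sum of $a_0+1$ real points on $a_0+1$ distinct components avoiding $C_1$, and there are at most $\binom{n(X)-1}{a_0+1}=\binom{n(X)-1}{2d-g-3r+1}$ such index sets, which is the asserted bound.

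The step I expect to fight hardest with is this divisor bookkeeping, where the margin is a single member of $T$ and one must produce a sub-divisor of $A+B$ with exactly the prescribed pseudo-lines, degree and parities. The genuinely delicate regime is the boundary $m=d-2r-1$, i.e. $a_0=0$, where $A$ contributes no members and one is thrown entirely onto the fixed divisor $B$; here whether a given $V$-component is hit hinges on whether $B$ carries a real point on the relevant component, and the Example preceding the theorem already shows this can go either way. I would therefore treat $a_0\ge1$ by the construction above and analyse $a_0=0$ directly from $K_X-(g-d+2r-1)T\sim B$. A second case needing separate handling is $\delta(T)=3$, which by the discussion before Lemma~\ref{divisorsinT} forces $n(X)=3$ and $g$ even; there every real member of $T$ meets all three components once, Lemma~\ref{divisorsinT} is unavailable and the fibre count must be redone, though $n(X)=3$ reduces this to a short finite check.
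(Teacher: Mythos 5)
Where your argument is complete, it is the paper's own. The reduction via the matching property of Lemma~\ref{nocompofU} to the identity $n(W_d^r)=n(U_d^r)+\#\{\text{un-hit components of }V_d^r(\R)\}$, the choice of real members $p_i+q_i+\tilde{q}_i\sim T$ from Lemma~\ref{divisorsinT}, and the carving of $D_2$ out of a split representative $A+D_3$ of $(2d-g-3r)T+D_3$ are exactly the paper's Case 2 ($\delta(T)=1$); your divisors match its choices $D_2=q_{j_1}+\cdots+q_{j_l}+(a_0+1-l)p_{j_l}$, resp.\ $D_2=q_{j_2}+\cdots+q_{j_l}+(a_0+2-l)p_{j_l}$, with $a_0=2d-g-3r$. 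Your deferred $\delta(T)=3$ case is the paper's Case 3, done by the same bookkeeping with a totally real member $t_1+t_2+t_3\sim T$; it is indeed a short finite check and poses no danger to the upper bound.

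The genuine gap is the case you flagged and left open, $a_0=0$ (equivalently $m=d-2r-1$), and it cannot be closed, because the asserted upper bound fails there. When $a_0=0$ and $m=g-d+r-1$, Theorem~\ref{tricoh} gives $h^0(X,K_X-(g-d+2r-1)T)=h^0(\p,\mathcal{O}_\p(a_0))=1$, so the \emph{only} effective divisor in this class is the fixed divisor $D_3$ itself, of degree $d-3r+1$; the components of $V_d^r(\R)$ are indexed by singletons $\{j\}$ (parity forces $l=1$), and the one indexed by $\{j\}$ meets $U_d^r(\R)$ if and only if $D_3$ contains a point of $C_j$. If $d-3r+1$ is even and $D_3$ consists entirely of conjugate pairs of non-real points, then no component of $V_d^r(\R)$ is hit, so $n(W_d^r)=s_{n(X)}(d-3r)+n(X)$, exceeding the claimed bound $s_{n(X)}(d-3r)+\binom{n(X)-1}{1}$ by one. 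The paper's own Example realizes this: for $g=5$, $d=4$, $r=1$ and a plane quintic model with a complex node it concludes $n(W_4^1)=2n(X)$, while Theorem~\ref{teoremim} asserts $n(W_4^1)\le 2n(X)-1$. Correspondingly, the paper's proof breaks at precisely this spot: in its Case 2, the sub-case $j_1=1$, $l=1$ (the only kind of component that exists when $a_0=0$) produces $D_1=D_3-p_{j_l}+\cdots$, which is not effective. So your proposal is incomplete as a proof of the stated theorem, but the missing case is not a defect of your approach: it is the locus of an error in the paper, and your diagnosis that the boundary regime ``can go either way'' is exactly right. (For $a_0\ge1$ your construction, like the paper's, does cover every needed component, including $J=\{1\}$, since parity then forces $a_0\ge2$.)
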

\begin{proof}
	The lower bound is already given in Corollary \ref{cor1}.	For the upper bound, let us first consider the equality $s(d,r,g)=2d-g-3r+1=0$. In this case $V_d^r$ is just one point and \eqref{nwrd} gives us an upper bound $$n(W_d^r)\leq s_{n(X)}(d-3r)+1$$ which is consistent with the statement of the theorem. Then we can assume $s(d,r,g)>0$ and divide the rest of the proof into three cases.
	
	\textbf{Case 1: } $\delta(T)=1\text{ and }n(X)=1.$
	
	In this case we know that both of $U_d^r(\R)$ and $V_d^r(\R)$ are connected, because of the equalities $$n(U_d^r)=\sum_{s=0}^{\lfloor \frac{d-3r}{2}\rfloor}\binom{1}{d-3r-2s}=1=\sum_{s=0}^{\lfloor \frac{s(d,r,g)}{2}\rfloor}\binom{1}{s(d,r,g)-2s}=n(V_d^r).$$
	
Since $\delta(T)=1,$ we can find points $q_1\in C_1$ and $q_2,q_3\in X$ such that $q_2+q_3$ is also real and $T\sim q_1+q_2+q_3$. If we choose the real effective divisors$$D_2=(2d-g-3r-1)q_1+q_2+q_3$$ and $$D_1=(2d-g-3r)(q_1+q_2+q_3)+D_3-D_2,$$  they satisfy $D_1+D_2\sim(2d-g-3r)T+D_3$,
where $D_3$ is the fixed real divisor in \eqref{D_1+D_2simT} such that the intersection $U_d^r(\R)\cap V_d^r(\R)$ is non-empty. Hence, $n(W_d^r)=1$ in this case and the theorem follows:

$$1=s_{n(X)}(d-3r)\leq n(W_d^r)\leq 1+\binom{n(X)-1}{2d-g-3r+1}.$$

	\textbf{Case 2: } $\delta(T)=1$ and $n(X)>1$.

	We want to show that any component $\kappa-\big((g-d+r-1)\tau+V'(j_1,...,j_l)\big)$ of $V_d^r(\R)$ has a non-empty intersection with $U_d^r(\R)$ possibly except the ones with $l=2d-g-3r+1$ and $j_1\neq1$. 
	
	Let us use the points $p_i\in C_1$ and $q_i,\tilde{q}_i\in C_i$ of Lemma \ref{divisorsinT} such that $T\sim p_i+q_i+\tilde{q}_i$ for each $i=2,...,n(X)$. 
	
	If there is such a connected component with $j_1\neq1$ and $l<2d-g-3r+1$, then $j_k>1$ for each $k$. Consider the equivalence
	$$(2d-g-3r)T+D_3\sim (2d-g-3r-l)(p_{j_l}+q_{j_l}+\tilde{q}_{j_l})+\sum_{n=j_1}^{j_l}(p_n+q_n+\tilde{q}_n)+D_3.$$
	Then the sum of the divisors
	\begin{equation*}
		\begin{split}
			D_1&=D_3+\left(\sum_{n=j_1}^{j_l}(p_n+\tilde{q}_n)\right)-p_{j_l}+(2d-g-3r-l)(q_{j_l}+\tilde{q}_{j_l}) \text{ and }\\
			D_2&=q_{j_1}+...+q_{j_l}+(2d-g-3r+1-l)p_{j_l}
		\end{split}
	\end{equation*}
	is linearly equivalent to $(2d-g-3)T+D_3$. Note that $D_2\in U(j_1,...,j_l)\subset X_{2d-g-2}(\R)$ since $2d-g-3r+1-l$ is an even number by the very description of these components. Then $\kappa-((g-d+r-1)\tau+V'(j_1,...,j_l))$ intersects with $U_d^r(\R)$.
	
	If $j_1=1$ and $l\leq2d-g-3r+1$, then we have another equivalence $$(2d-g-3r)T+D_3\sim (2d-g-3r+1-l)(p_{j_l}+q_{j_l}+\tilde{q}_{j_l})+\sum_{n=j_2}^{j_l}(p_n+q_n+\tilde{q}_n)+D_3$$ such that the divisors 	\begin{equation*}
		\begin{split}
			D_1&=D_3+\left(\sum_{n=j_2}^{j_l}(p_n+\tilde{q}_n)\right)-p_{j_l}+(2d-g-3r+1-l)(q_{j_l}+\tilde{q}_{j_l}),\\
			D_2&=q_{j_2}+...+q_{j_l}+(2d-g-3r+2-l)p_{j_l}
		\end{split}
	\end{equation*}
	add up to a divisor linearly equivalent to $(2d-g-3r)T+D_3$. As $2d-g-3r+2-l$ is an odd number, we have $D_2\in U(j_1,...,j_l)$.

	This shows that in addition to the number $n(U_d^r)=n(W^0_{d-3r})$, there could be at most $\binom{n(X)-1}{2d-g-3r-1}$-many possible connected components of the form 
	$$\kappa-\big((g-d)\tau+V'(j_1,...,j_{2d-g-3r+1})\big)$$ coming from $V_d^r(\R)$, where $2\leq j_1<...<j_{2d-g-3r+1}\leq n(X)$.
	
	\textbf{Case 3: }$\delta(T)=3$.
	

	In this case we know that $n(X)=3$ and $g$ is even. Without loss of generality, assume that $r$ is odd. This makes $2d-g-3r+1$ an even number so that the connected components of $V_d^r(\R)$ are just translates of $V'(0), V'(1,2), V'(1,3)$ and $V'(2,3)$. Since $\delta(T)=3,$ we have points $t_1\in C_1, t_2\in C_2, t_3\in C_3$ such that $t_1+t_2+t_3\sim T$. Consider the equivalence	
$$(2d-g-3r)T+D_3\sim (2d-g-3r)(t_1+t_2+t_3)+D_3.$$

	For each $i$ and $j$ with $1\leq i<j\leq3$ we can choose  $D_2=(2d-g-3r)t_i+t_j$ and $D_1$ as the rest of $(2d-g-3r)(t_1+t_2+t_3)+D_3$ such that $D_1+D_2\sim(2d-g-3r)T+D_3$. This choice gives rise to a point of intersection of $U_d^r(\R)$ and $\kappa-((g-d+r-1)\tau+V'(i,j))$.
	
	Also the choice $D_2=(2d-g-3r-1)t_1+2t_2$ lies in $U(0)$ and together with $D_1=t_1+(2d-g-3r-2)t_2+(2d-g-3r)t_3+D_3$ they add up to $(2d-g-3r)(t_1+t_2+t_3)+D_3.$ So, $\kappa-((g-d)\tau+V'(0))$ intersects with $U_d^r(\R)$ as well. Hence, we obtain the equality $n(W_d^r)=n(W^0_{d-3r})=s_{n(X)}(d-3r)$ in the case $\delta(T)=3$. \end{proof}

\begin{figure}
	\centering
	\includegraphics[width=1\linewidth]{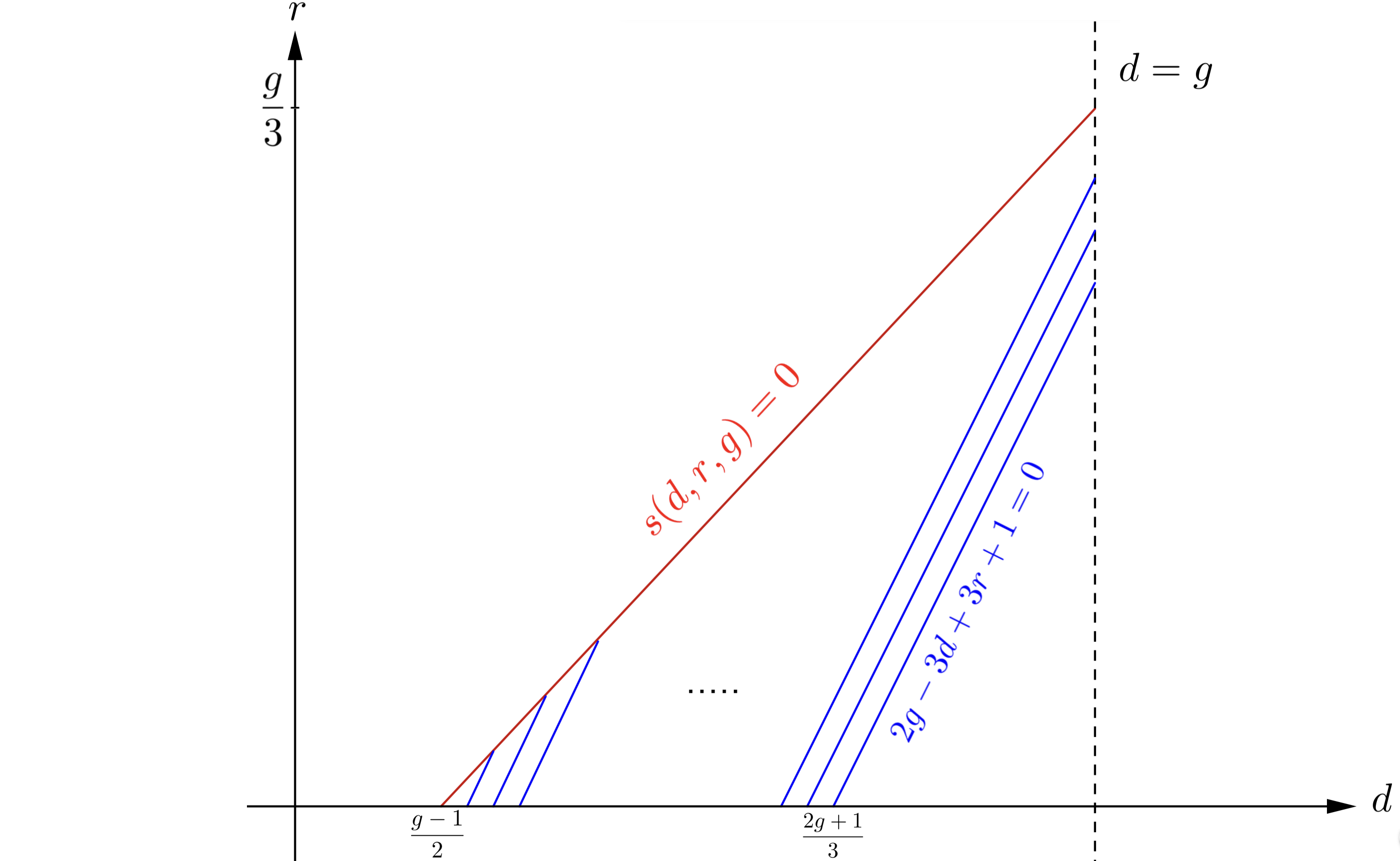}
	\caption{Admissible $(d,r)$ pairs}
	\label{fig:dr}
\end{figure}

Given the integers $g$ and $m$ satisfying $0<(g-4)/3\leq m\leq (g-2)/2$, we know the dimension of real trigonal curves of genus $g$ and Maroni invariant $m$ by Remark \ref{modulitrigonal}. At this point we may ask whether we have a pair $(d,r)$ satisfying the conditions of Theorem \ref{teoremim}. For such integers $g$ and $m$, there is a unique line segment in Figure \ref{fig:dr} depending on the value of $m$ parallel to the line $2g-3d+3r+1=0$ on which all the lattice points $(d,r)$ satisfy the conditons of Theorem \ref{teoremim}. Hence, this theorem upgrades the natural bounds obtained in \eqref{nwrd} and Corollary \ref{cor1} for $(d,r)$ values lying on a line segment.

If we consider the curves with $m=\dfrac{g-3}{3}$, then this means that we choose to look at the line segment $2g-3d+3r=0$ and in this case we can say more.

\begin{teorem}\label{teoremim2}
\thmtext

\end{teorem}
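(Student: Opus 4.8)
The plan is to show that the upper bound of Theorem \ref{teoremim} is attained, by proving that \emph{every} one of the $\binom{n(X)-1}{L}$ exceptional components of $V_d^r(\R)$ left over in Case~2 of that proof---those of the form $\kappa-\bigl((g-d+r-1)\tau+V'(j_1,\dots,j_{L})\bigr)$ with $L=2d-g-3r+1$ and $2\le j_1<\dots<j_{L}\le n(X)$---is disjoint from $U_d^r(\R)$. Once this is established, Lemma \ref{nocompofU} guarantees that distinct components of $V_d^r(\R)$ are never merged inside $W_d^r(\R)$, so that each such component contributes a separate connected component; since Case~2 already shows that all \emph{other} components of $V_d^r(\R)$ do meet $U_d^r(\R)$, the additive count yields $n(W_d^r)=n(U_d^r)+\binom{n(X)-1}{L}=s_{n(X)}(d-3r)+\binom{n(X)-1}{2d-g-3r+1}$, matching the lower bound of Corollary \ref{cor1}. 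The hypothesis $2g-3d+3r=0$ enters immediately: by Remark \ref{g-kt-basepts} the class $K_X-(g-d+2r-1)T$ then has exactly one base point, so we may fix a single \emph{real} point $p$ with
\[
K_X-(g-d+2r-1)T\sim (2d-g-3r)T+p,
\]
while Lemma \ref{deltanot3} gives $\delta(T)=1$. Writing $N:=2d-g-3r$ we have $L=N+1$; when $n(X)=1$ the binomial vanishes and the statement is just Case~1 of Theorem \ref{teoremim}, so we assume $n(X)>1$.

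First I would locate $p$. Since $\delta(K_X)=0$ and $C_1$ is the unique pseudo-line of $T$, the mod-$2$ vector recording on which components a real class has odd degree (a linear-equivalence invariant, because $\operatorname{div}(f)$ has even degree on each component) is $\vec 0$ for $K_X$ and $e_1$ for $T$. Comparing both sides of the displayed equivalence, the vector attached to $p$ is $(g-d+2r-1+N)e_1=(d-r-1)e_1=e_1$, because $2g=3(d-r)$ forces $d-r$ even. Hence $\delta(p)=1$ with $p\in C_1$. This is the crucial reality input: $p$ lies on the pseudo-line and therefore covers none of the target points $q_{j_k}$, which sit on components $C_{j_k}$ with $j_k\ge 2$.

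Next I would identify the moving part $NT$. Using Theorem \ref{tricoh} (equivalently the relative-duality splitting $\pi_*\mathcal{O}_X\cong\mathcal{O}_\p\oplus\mathcal{O}_\p(-(m+2))\oplus\mathcal{O}_\p(-(g-m))$), the hypotheses $m=g-d+r-1$ and $2g-3d+3r=0$ give $N-(m+2)=-r-1<0$ and $N-(g-m)<0$, so $\pi_*L^{N}$ has only the summand $\mathcal{O}_\p(N)$ with sections. Thus $h^0(X,NT)=N+1=h^0(\p,\mathcal{O}_\p(N))$, the map $\pi^*\colon H^0(\p,\mathcal{O}_\p(N))\to H^0(X,NT)$ is an isomorphism, and every effective divisor in $|NT|$ is a union of $N$ fibres of $\pi$. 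Consequently every member of $|K_X-(g-d+2r-1)T|$ has the shape $p+\pi^*(t_1+\dots+t_N)$.

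The heart of the argument, and the step I expect to be the main obstacle, is the resulting pigeonhole. By \eqref{D1+D2=D3} an exceptional component meets $U_d^r(\R)$ if and only if some member of $|K_X-(g-d+2r-1)T|$ contains $D_2=q_{j_1}+\dots+q_{j_{N+1}}$, i.e.\ if and only if $N$ fibres of $\pi$ cover these $N+1$ points. Each $q_{j_k}$ is real, so it is covered only by a fibre over a real point of $\p$; and by Lemma \ref{divisorsinT} a real fibre is either $p_i+q_i+\tilde q_i$ with $p_i\in C_1,\ q_i,\tilde q_i\in C_i$, or $p'+q+\bar q$ with $p'\in C_1$ and $q$ non-real. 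In either case its points meet $C_1$ and at most one further component, so---because the $q_{j_k}$ lie on $N+1$ \emph{distinct} components, all different from $C_1$---a single real fibre contains at most one of them, while $p\in C_1$ contains none. Hence $N$ fibres cover at most $N<N+1$ of the target points, giving $h^0\bigl(K_X-(g-d+2r-1)T-D_2\bigr)=0$ and an empty intersection. The delicate points to get exactly right are that repeated fibres add no new points to the count, that distinct components are genuinely hit at most once per fibre, and the parity bookkeeping distinguishing $g+d$ even from odd in the location of $p$; everything else rests on Theorem \ref{teoremim} and Lemma \ref{nocompofU}.
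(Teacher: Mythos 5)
Your proof is correct and follows essentially the same route as the paper's: identify the unique real base point $p\in C_1$ via the parity of $d-r$, reduce to showing no member of $|(2d-g-3r)T|$ can contain $N+1$ real points on distinct components other than $C_1$, and conclude by pigeonhole on fibres of $\pi$ together with Lemma \ref{nocompofU} and Case~2 of Theorem \ref{teoremim}. The only difference is that you make explicit two steps the paper leaves implicit (the pushforward computation showing every member of $|NT|$ is a sum of $N$ fibres, and the placement of $p$ on the pseudo-line), which strengthens rather than changes the argument.
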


\begin{proof}
	Note that by Lemma \ref{deltanot3} $\delta(T)=1$ in this case. Moreover, if $C_1$ is the pseudo-line for $T$, then we have $$K_X-(g-d+2r-1)T\sim(2d-g-3r)T+p$$ for some fixed point $p\in C_1$ by Remark \ref{g-kt-basepts}. Suppose that an exceptional component $\kappa-\big((g-d+r-1)\tau+V'(j_1,...,j_l)\big)$ such that $l=2d-g-3r+1$ and $j_1\neq1$ has a non-empty intersection with $U_d^r(\R)$. Then there exist real effective divisors $D_1$ and $D_2$ such that $D_2\in U(j_1,...,j_l)\subset X_{2d-g-3r+1}(\R)$ and $$D_1+D_2\sim\left(2d-g-3r\right)T+p. $$
	
	Recall that $p$ is the base point of this linear system. Since $\delta(D_2)=\deg(D_2)=2d-g-3r+1$ and $j_1\neq1,$ $D_2$ cannot contain $p$. So, $D:=D_1-p$ is effective.  On the other hand, effective divisors linearly equivalent to $T$ cannot contain two points with different pseudo-lines other than $C_1$. Therefore, there cannot be a divisor $D_2$ with $\delta(D_2)=2d-g-3r+1$ such that $C_1$ is not a pseudo-line for $D_2$ and $D+D_2\sim (2d-g-3r)T$ for some $D>0$. 
\end{proof}

\section*{Acknowledgements}
I would like to thank my supervisor Prof. Ali Ulaş Özgür Kişisel for his sincere guidance and supportive feedback throughout my PhD research. This study builds upon the findings presented in my doctoral dissertation \cite{akyar}.

\end{document}